\newcommand{\sK}{{\sf K}}
\newcommand{\eS}{\emph{S}}
\newcommand{\tS}{\text{S}}
\newcommand{\lt}{\left}
\newcommand{\rt}{\right}
\newcommand{\la}{\langle}
\newcommand{\ra}{\rangle}
\newcommand{\lf}{\lfloor}
\newcommand{\rf}{\rfloor}
\newcommand{\om}{\omega}
\newcommand{\N}{\mathbb{N}}
\newcommand{\R}{\mathbb{R}}
\newcommand{\Z}{\mathbb{Z}}
\newcommand{\cF}{\mathcal{F}}
\newcommand{\cI}{\mathcal{I}}
\newcommand{\cO}{\mathcal{O}}
\newcommand{\cR}{\mathcal{R}}
\newcommand{\cS}{\mathcal{S}}
\newcommand{\cK}{\mathcal{K}}
\newcommand{\cT}{\Omega}
\newcommand{\cC}{\mathfrak{C}}
\newcommand{\cX}{\mathcal{Y}}
\newcommand{\rO}{\mathrm{O}}
\newcommand{\sE}{{\sf E}}
\newcommand{\si}{{\sf i}}
\newcommand{\sL}{{\sf L}}
\newcommand{\sN}{{\sf N}}
\newcommand{\so}{{\sf o}}
\newcommand{\sP}{{\sf P}}
\newcommand{\sQ}{{\sf Q}}
\newcommand{\sS}{{\sf S}}
\newcommand{\sT}{{\sf T}}
\newcommand{\supp}{{\rm supp\,}}
\newcommand{\uc}{c_*}
\newcommand{\bc}{c^*}
\pgfplotsset{compat=newest}
\numberwithin{equation}{section}
\theoremstyle{plain}
\newtheorem{theorem}{Theorem}[section]
\newtheorem{corollary}[theorem]{Corollary}
\newtheorem{proposition}[theorem]{Proposition}
\theoremstyle{definition}
\newtheorem{definition}[theorem]{Definition}
\newtheorem{example}[theorem]{Example}
\theoremstyle{remark}
\newtheorem{rem}[theorem]{Remark}
\newcommand\xldownrupharpoon[2][]{%
\ext@arrow 0099{\ldownrupharpoonfill@}{#1}{#2}}
 \def\ldownrupharpoonfill@{%
\arrowfill@\leftharpoondown\relbar\rightharpoonup}
\newcommand{\srcsize}{\@setfontsize{\srcsize}{5pt}{5pt}}
\begin{document}
\title[Classification and threshold dynamics of SRNs]
  {Classification and threshold dynamics of stochastic reaction networks}

\author{Carsten Wiuf}
\address{
Department of Mathematical Sciences,
University of Copenhagen, Copenhagen,
2100 Denmark.}
\email{wiuf@math.ku.dk}
\author{Chuang Xu}
\address{
Department of Mathematics, University of Hawai'i at M\={a}noa, Honolulu, 96822, HI, USA.}
\email{chuangxu@hawaii.edu}

\subjclass[2020]{05C92, 92C42, 92E20.}

\date{\today}

\noindent

\begin{abstract}
 \emph{Stochastic reaction networks} (SRNs) provide models of many real-world networks. Examples include networks  in epidemiology, pharmacology, genetics, ecology, chemistry, and social sciences. Here, we model stochastic reaction networks  by continuous time Markov chains (CTMCs) and 
pay special attention to one-dimensional mass-action SRNs (1-d stoichiometric subspace). We classify all states of the underlying CTMC of 1-d SRNs. In terms of (up to) four parameters, we provide \emph{sharp checkable criteria} for various dynamical properties (including explosivity, recurrence, ergodicity, and the tail asymptotics of stationary or quasi-stationary distributions) of SRNs in the sense of their underlying CTMCs. As a result, we prove that all 1-d endotactic networks are non-explosive, and positive recurrent with an ergodic stationary distribution with Conley-Maxwell-Poisson (CMP)-like tail, provided the state space of the associated CTMCs consists of closed communicating classes. In particular, we prove the recently proposed \emph{positive recurrence conjecture} in one dimension: Weakly reversible mass-action SRNs with 1-d stoichiometric subspaces are positive recurrent. The proofs of the main results rely on our recent work on CTMCs with polynomial transition rate functions.\end{abstract}

\keywords{ Stochastic reaction networks, structurally identical, core networks, explosivity, positive recurrence, quasi-stationary distributions}
\maketitle

\section{Introduction}

Many real-world phenomena can be modelled as reaction networks.  Examples include networks in epidemiology
 \cite{B09}, pharmacology  \cite{BI09}, ecology \cite{DDK19}, and social sciences \cite{P18}, as well as
 gene regulatory networks \cite{C18},  chemical reaction networks \cite{F19}, signalling networks \cite{PHPS05},   and metabolic
networks \cite{SZHS12}.  For example, the number of  individuals infected with some contagion in a population might be modelled by $S+I\to 2I$, $I\to R$, where $S$ denotes a susceptible individual, $I$ an infected and $R$ a recovered individual.

Noise is ubiquitous. The dynamics of the species composition in a reaction network might therefore be modelled as a  continuous time Markov chains (CTMCs) with an intensity for each reaction to occur. This applies in particular when the species counts  are low  \cite{AK11}. Alternatively, it might be modelled deterministically  in terms of ordinary differential equations (ODEs), when  the species abundances are high and noise is averaged out \cite{F19}.

This paper aims to contribute to the theory of {\em stochastic reaction networks} (SRNs), especially in dimension one. In our context, CTMCs are associated with an SRN on the ambient space $\N^d_0$, where $d$ is the number of species. To understand the dynamics of such CTMCs, it is  important to understand the decomposition of $\N^d_0$ into communicating classes.
Subsequently, one might decide whether a CTMC is recurrent, transient, explosive, or extinct on a particular class. In general this decomposition seems difficult to achieve, and even in concrete cases, it might be hard to disentangle open classes from  closed classes. When the stoichiometric subspace of the SRN is one-dimensional, we provide an explicit full classification for  all sorts of communicating classes (Theorem~\ref{th-1d-classification}).

We next turn towards dynamical properties (e.g., explosivity, recurrence, ergodicity, quasi-ergodicity) of the CTMCs associated with an SRN. This has been a topic of recent interest. For example,   \emph{complex-balanced} mass-action SRNs have been shown to be non-explosive, using novel sufficient conditions  for non-explosivity of CTMCs  \cite{ACKK18}. Indeed, the mechanism for SRNs to be explosive (or non-explosive) is largely unknown \cite{ACKK18}. To help fill this gap, we provide a necessary and sufficient condition for non-explosivity of mass-action SRNs with one-dimensional  stoichiometric subspace. This is accomplished in terms of four parameters, that provide threshold criteria for (non-)explosivity of SRNs (Theorem~\ref{th-Threshold}).

Another topic of prime importance
is \emph{ergodicity}, which generally guarantees the well-posedness of a SRN model. A scalable computational sufficient condition for ergodicity of essential SRNs has been proposed, and  applied  to gene regulatory networks \cite{GBK14}.
Similar results have recently been established for ergodicity of quasi-stationary distributions (QSDs) for extinct SRNs  \cite{HW18}. Again, for mass-action SRNs with one-dimensional  stoichiometric subspace, we provide a sharp criterion for ergodicity as well as quasi-ergodicity (Theorem~\ref{th-Threshold}), also in terms of the four aforementioned parameters.

Complex-balanced SRNs are  weakly reversible and  positive recurrent with an ergodic Poisson product-form stationary distribution \cite{ACK10,ACKK18}.
For the extended class of weakly reversible reaction networks, positive recurrence as well as exponential ergodicity has been showcased in some cases \cite{AK18,ACK19,AK22}.
It is  conjectured  that all weakly reversible mass-action SRNs are positive recurrent, as an analogue of the celebrated Global Attractor Conjecture for deterministic reaction networks \cite{AK18}. 
We verify this positive recurrence conjecture in the affirmative for all one-dimensional mass-action SRNs (Corollary~\ref{cor-posrec}).

Since weakly reversible mass-action SRNs are essential and  endotactic, we indeed prove a stronger result: in dimension one, all mass-action essential endotactic SRNs are positive recurrent (Theorem~\ref{th-endotactic}). Furthermore, the stationary distributions have tails like the \emph{Conley-Maxwell-Poisson} (CMP) distribution, a generalization of the Poisson distribution known from complex-balanced SRNs (Corollary~\ref{cor-end-tail}). Indeed, we provide a trichotomy for one-dimensional ergodic SRNs regarding the tails of their stationary distributions as well as QSDs (Theorem~\ref{th-tail}): The tails decay  like CMP distributions (super-exponential), geometric distributions (exponential), or power-law distributions (sub-exponential).

\section{Preliminaries}
\subsection{Notation}\label{sec:notation}

Let $\R$, $\R_{\ge 0}$, $\R_{>0}$ be the set of real, non-negative real, and positive real  numbers, respectively. Let $\Z$ be the set of integers, $\N=\Z\cap \R_{>0}$ and $\N_0=\N\cup\{0\}$.

For $a\in\R$, let $\lceil a\rceil$ be the ceiling function (i.e., the minimal integer $\ge a$), and $\lfloor a\rfloor$ the floor function (i.e., the maximal integer $\le a$).
 For $x=(x_1,\ldots,x_d)\in\R^d$,  
 define:
$$\widehat x_j=(x_1,\cdots,x_{j-1},x_{j+1},\cdots,x_d).$$
For $x,y\in\R^d$, let $\|x\|_1$ denote the $\ell_1$-norm of $x$, $\la x,y\ra$  the inner product of $x$ and $y$, and $x\ge y$ (similarly, $y\le x$, $x>y$, $y<x$)  if it holds coordinate-wise. Furthermore,  
let $x\Z+y=\{z\in\R^d|z=xn+y, n\in\Z\}$.  
 For a non-empty subset $A\subseteq\N^d_0$, let
$\text{cl}(A)=\{x\in \N^d_0\colon x\ge y\,\, \text{for some}\ y\in A\}$ be the closure of $A$. 
A set $A$ is the {\em minimal set} of a 
set $B\subseteq\N_0^d$ if $A$ consists of all elements $x\in B$ such that $x\not\ge y$ for all $y\in B\setminus\{x\}.$ In particular, $A$ is {\em finite} and $\text{cl}(A)=\text{cl}(B)$ \cite{XHW20a}. 

Let $f$ and $g$ be non-negative functions defined on an unbounded set $A\subseteq\R_{\ge 0}$. We denote $f(x)\lesssim g(x)$ if there exists $C,N>0$ such that $f(x)\le C g(x)$ for all  $x\in A$, $x\ge N,$
that is, $f(x)=\rO(g(x))$, since $f$ is non-negative. Here, $\rO$ refers to the standard big O notation.

\subsubsection{Greatest common divisor} \label{sec:gcd}

For $x,y\in\Z^d$,  $x\neq(0,\cdots,0)$, we say $x$ is a {\em (positive) divisor} of $y$, if there exists  $a\in\Z$ ($a\in\N$), such that $y=ax$.  For $A\subseteq\Z^d$, $x$ is a {\em common (positive) divisor} of $A$, 
 if it is a common (positive) divisor of all $y\in A$. Moreover, $x$ is the {\em greatest common divisor} (gcd) of $A$, denoted $\gcd(A)$, if $x$ is a common divisor of all other common divisors of $A$, and the first non-zero coordinate of $x$ is positive. Hence, $\gcd(A)$ is unique, if it exists.
Not all subsets of $\Z^d$ have a gcd (or even a common divisor), e.g., $A=\{(1,2),(2,1)\}$. Indeed, a non-empty set  $A\subseteq\Z^d\setminus\{(0,\cdots,0)\}$ is contained in a line,  if and only if $A$ has a common divisor, if and only if $\gcd(A)$ exists \cite{XHW20a}.

For $x\in\Z^d$, let $\gcd(x)=\gcd(\{x_j\colon j=1,\ldots,d \})\in\N$.

\subsubsection{Lattice interval} \label{sec:lattice}

For $x,y\in\N^d_0$, denote by $[x,y]_1$ the lattice line segment  
in $\N_0^d$ from $x$ to $y$, referred to as the (closed) {\em lattice interval} between $x$ and $y$.  
Similarly, $[x,y[_1=[x,y]_1\setminus\{y\}$,  For $d=1$, $[x,y]_1$ is the set of integers from $x$ to $y$.   
and analogously for $]x,y[_1$ and $]x,y]_1$.
 Moreover, for a non-empty subset $A\subseteq\N^d_0$ on a {\em line}, if $A$ is not perpendicular to the axis of the first coordinate, then the elements in $A$ are comparable with respect to the  order induced by the first coordinate, denoted $\le_1$.
 Let $\min_1 A$ and $\max_1 A$ be the unique minimum and maximum induced by the order.  

\subsection{Markov chains}\label{sec:mc}

Let  $Q$ be a conservative $Q$-matrix on $\N^d_0$ with entries $q_{x,x'},$ $x, x'\in\N_0^d$ \cite{R57}. Let
$\cT=\{x'-x\colon q_{x,x'}>0\ \text{for some}\ x, x'\in\N^d_0\}$
 be the  set of \emph{jump vectors}, and define the \emph{transition rate functions} by
\[\lambda_{\om}\colon \N^d_0\to[0,+\infty), \quad \lambda_\om(x)= q_{x,x+\om},\quad  x\in\N_0^d, \quad \om\in\cT.\]

 A state $x'\in\N_0^d$ is {\em one-step reachable} from $x\in\N^d_0$  if $q_{x,x'}>0$.
  A state $x'\in\N_0^d$ is {\em reachable} from $x\in\N^d_0$ (or equivalently, $x$ \emph{leads to} $x'$), denoted by $x\rightharpoonup y$,
if there exists a sequence of states $x^{(j)}$, $j=1,\ldots,m$ for some $m\in\N_0$, such that $x=x^{(1)}$ and $x'=x^{(m)}$, and $x^{(j)}$ is one-step reachable from $x^{(j-1)}$, $j=2,\ldots,m$.  
In particular, $x$ is reachable from itself.
A state $x$ \emph{communicates} with $x'$,  if  $x\rightharpoonup x'$ and $x'\rightharpoonup x$.  
Hence, `communicates'   
defines an equivalence relation on $\N^d_0$, and partitions $\N^d_0$ into {\em communicating classes}.  
A non-empty subset $E\subseteq\N^d_0$ is {\em closed} if $x\in E\ \text{and}\ x\rightharpoonup x'\ \text{implies}\ x'\in E$. A set is {\em open} if it is not closed. A state $x$ is {\em absorbing} ({\em escaping}) if $\{x\}$ is a closed (open) class. An absorbing state $x$ is {\em neutral} if $x$ is not reachable from any other state $x'\not=x$,  
and otherwise, it is {\em trapping}. A non-singleton communicating class is an {\em irreducible component} (IC). A closed IC is a {\em positive irreducible component} (PIC), while an open IC is a {\em quasi-irreducible component} (QIC). Any singleton communicating class is either an absorbing state (neutral or trapping), or an escaping state.  
Let $\sN$, $\sT$, $\sE$, $\sP$, and $\sQ$ be the (possibly empty) set of neutral states, trapping states, escaping states, positive irreducible components and quasi-irreducible components for $Q$, respectively.

A CTMC $(Y_t)_{t\ge 0}$   with initial state $Y_0=x$ is \emph{explosive} at $x$ if it jumps infinitely many times in finite time with positive probability, and is \emph{non-explosive} at $x$ otherwise.  
Furthermore,  $(Y_t)_{t\ge 0}$   is {\em recurrent} ({\em positive} or {\em null recurrent}), or {\em transient} on a PIC according to the standard meanings of the terms \cite{N98}.

Let $\tau_{\partial}=\inf\{t>0\colon Y_t\in\partial\}$ be the entrance time of $(Y_t)_{t\ge 0}$ into a set of states $\partial\subseteq \cX$. We say, $(Y_t)_{t\ge 0}$ has \emph{certain absorption} into $\partial$ if $\tau_{\partial}<\infty$ almost surely for all $Y_0\in\partial^{\sf c}=\cX\setminus\partial$. Moreover, the process associated with $(Y_t)_{t\ge 0}$, conditioned to  be never absorbed, is called a {\em $Q$-process} \cite{CV16}.
A probability measure $\nu$ on $\partial^{\sf c}$ is a quasi-stationary distribution (QSD) of $(Y_t)_{t\ge 0}$ if for all $t\ge0$,
\[\mathbb{P}_{\nu}(Y_t\in A|\tau_{\partial}>t)=\nu(A),\quad  A \subseteq \partial^{\sf c},\]
where $\mathbb{P}_{\nu}$ is the conditional probability measure such that  $Y_0\sim\nu$.

A probability measure $\pi$ on $\N^d_0$ is a \emph{stationary distribution} if $\pi$ is a  non-negative equilibrium of the so-called {\em master equation} \cite{G92}:
 \[0=\sum_{\omega\in\cT}\lambda_{\omega}(x-\omega)\pi(x-\omega)-\sum_{\omega\in\cT}\lambda_{\omega}(x)\pi(x),\quad x\in\N_0^d,\]
where $\lambda_\omega(x)$ is set to zero if $x\not\in\N_0^d$.

\subsection{Stochastic reaction networks}

A \emph{reaction network}  is  a triplet of finite nonempty sets $(\cS, \mathcal{C}, \cR)$, where:
  \begin{enumerate}
    \item[$\bullet$] $\cS=\{\tS_j\}_{j=1}^d$ is a set of symbols, termed {\em species};
    \item[$\bullet$] $\mathcal{C}$ is a set of linear combinations of species on $\N_0$, termed {\em complexes}; and
    \item[$\bullet$] $\cR\subseteq \mathcal{C}\times\mathcal{C}$ is a set of \emph{reactions}, such that $(y,y)\not\in\cR$.  A reaction $(y,y')$ is denoted $y\to y'$. The complex $y$ is called the {\em reactant} and $y'$ the {\em product}.
  \end{enumerate}
  
  We assume every species is in some complex and every complex is in some reaction. Hence, we identify $\cR$ with the reaction network  as $\cS$ and $\mathcal{C}$  can be found from $\cR$.
  The species are ordered such that $\cS$ is identified with $\{1,\ldots,d\}$ and the complexes are identified with vectors in $\N^d_0$, for instance, for $d=3$, the complex $\tS_1+2\tS_3$ is expressed as $(1,0,2)$.   The pair $(\mathcal{C},\cR)$ forms a digraph in $\N^d_0$ referred to as the {\em reaction graph}.

 A \emph{stochastic reaction network} (SRN) is a pair $(\cR,\cK)$, consisting of a reaction network  $\cR$ and a \emph{kinetics} $\mathcal{K}=(\eta_{y\to y'})_{y\to y'\in\cR}$, where  $\eta_{y\to y'}\colon\N^d_0\to\R_{\ge0}$ is the {\em rate function} of $y\to y'$, expressing the propensity of the reaction  to occur.     A special kinetics is {\em stochastic mass-action kinetics},
\[\eta_{y\to y'}(x)=\kappa_{y\to y'}\prod_{i=1}^d x_i(x_i-1)\ldots(x_i-y_i+1)=\kappa_{y\to y'}x^{\underline{y}},\quad x=(x_1,\ldots,x_d),\]
where the latter equality is a definition.
Hence,  for $x\in\N^d_0$, $\eta_{y\to y'}(x)$ is positive  if and only if $x\ge y$.

 For an SRN $(\cR,\cK)$, define a $Q$-matrix on $\N^d_0$ by  
   \[q_{x,x'}=\sum_{y\to y'\in\cR\colon y'-y=x'-x}\eta_{y\to y'}(x),\quad x,x'\in\N^d_0,\]
  The set $\cT=\{y'-y\colon\ y\to y'\in\cR\}$ is   the set of jump vectors of $Q$, called the set of  \emph{reaction vectors} in the present context. The $Q$-matrix defines a decomposition of $\N^d_0$ into communicating classes and into the sets $\sN$, $\sT$, $\sE$, $\sP$, and $\sQ$, see Section \ref{sec:mc}.

Finally, we recall some important types of reaction networks well-studied in the literature \cite{CNP13,GMS14}. A reaction network is {\em weakly reversible} if
the reaction graph is a finite  union of strongly connected components. A larger class of reaction networks, which includes weakly reversible reaction networks \cite{GMS14}, is \emph{endotactic networks} (see Appendix~\ref{appendix_endotactic_network}  for definition and some properties). A subclass  is \emph{strongly endotactic networks}. Strongly endotactic deterministic mass-action   reaction networks are permanent \cite{GMS14}. In contrast,  strongly endotactic stochastic reaction networks  with   mass-action kinetics may be transient or explosive \cite{ACKN18}.

\section{Structural classification of SRNs}

In this section, we investigate the relationship between  the structure of the reaction graph of an SRN and the structure of the corresponding $Q$-matrix.
This relationship does not rely on the specific kinetics  provided the  kinetics fulfils a mild condition. Specifically, for an SRN $(\cR,\cK)$ with $\cK=(\eta_{y\to y'})_{y\to y'\in\cR}$,   we assume the following  condition:

\medskip
($\rm\mathbf{H1}$) For  $y\to y'\in\cR$, $\eta_{y\to y'}(x)>0$ if and only if $x\ge y$, $x\in\N^d_0$.
\medskip

In particular, stochastic mass-action fulfils 
($\rm\mathbf{H1}$). Furthermore, the assumption ensures the ambient space $\N^d_0$ is invariant \cite{AK15}, and that the partitioning of $\N^d_0$ into communicating classes is independent of the kinetics, subject to ($\rm\mathbf{H1}$).

\medskip
($\rm\mathbf{H2}$) $\dim\sS=1$.
\medskip

 From ($\rm\mathbf{H2}$), the greatest common divisor of $\cT$ exists, $\om^*=(\om^*_1,\ldots,\om^*_d)=\gcd(\cT)$, see Section \ref{sec:gcd}. Note that $\om^*$ may not be in $\cT$. 
 Let $\om^{**}=\gcd(\{\om^*_1,\ldots,\om^*_d\})$.

Let $\cR_{\pm}=\{y\to y'\in\cR\colon \pm(y'-y)\cdot\om^*>0\}$ be the sets of {\em positive} and {\em negative reactions}, respectively. Then $\cR=\cR_+\cup\cR_-$ provides a decomposition of $\cR$ into two subnetworks.
 To avoid trivial dynamics (like pure-birth or pure-death processes), we assume

\medskip
 ($\rm\mathbf{H3}$) $\cR_-\neq\varnothing$ and $\cR_+\neq\varnothing$.
\medskip

\medskip
 ($\rm\mathbf{H4}$) $\om^*\in\N^d$.

\medskip
There are no infinite ICs when ($\rm\mathbf{H4}$) fails, and hence the dynamics can be classified by known means   \cite{CMM13}.
We are interested in the dynamics of CTMCs on infinite state spaces associated with an SRN, assuming $\rm{(\mathbf{H1})}$-$\rm{(\mathbf{H4})}$. 
For regularity, we impose a simplifying assumption:

\medskip
($\rm\mathbf{H5}$) For each species in $\cS$, we assume  its stoichiometric coefficient changes in at least one reaction.  
\medskip

By ($\rm\mathbf{H2}$) and ($\rm\mathbf{H5}$),  $\om^*_j\neq0$ for all $j=1,\ldots,d$. If there are species not fulfilling assumption ($\rm\mathbf{H5}$), then their numbers are constant in time (as the stoichiometric coefficients are unchanged). These numbers might be `absorbed' inot the reaction rate constants, thereby allowing the results stated below to be applied even if  ($\rm\mathbf{H5}$) fails.

\begin{definition}\label{def:ess-ext}
 $(\cR,\cK)$ is \emph{essential} if  $\N^d_0=\sN\cup\sP$, and \emph{extinct} if  $\sP=\varnothing$.
\end{definition}

 Consequently, an SRN can be  essential, extinct, or  neither essential nor extinct. For an essential SRN, its ambient space consists of closed communicating classes. In contrast, for an extinct SRN, all   states are transient or neutral. When an SRN is  neither essential nor extinct, the ambient space $\N^d_0$  contains both PICs and open communicating classes (i.e., escaping states or QICs). 

Let $\cI$ (inputs) and $\cO$ (outputs) be the set of reactants and products, respectively. For  $\omega\in\cT$, let $\cI_{\om}$ be the minimal set of the reactants $\{y\colon y\to y'\in\cR, y'-y=\omega\}$,  and $\cO_{\om}=\cI_{\om}+\om$ the minimal set of products.  
By  the definition of a minimal set, we have
\begin{equation*}
\underset{\om\in\cT}{\cup}\cI_{\om}\subseteq\cI,\quad \text{cl}(\cI)=\underset{\om\in\cT}{\cup}\text{cl}(\cI_{\om}),
\quad \underset{\om\in\cT}{\cup}\cO_{\om}\subseteq\cO,\quad \text{cl}(\cO)=\underset{\om\in\cT}{\cup}\text{cl}(\cO)_{\om}.
\end{equation*}

Let $\sS\subseteq\R^d$ be the {\em stoichiometric subspace} of an SRN $(\cR,\cK)$, that is, the linear span of the reaction vectors. For $c\in\N^d_0$, define the {\em (stochastic) stoichiometric compatibility class}  (SCC)    of $\cR$ as
\begin{equation*}\label{StocComp}
\sL_c=\lt(\sS+c\rt)\cap\N_0^d.
\end{equation*}
These classes are translational invariant:
$$\sL_c=\sL_{c'},\quad \text{if}\quad c-c'\in\sS\quad \text{and}\quad \sL_c\cap\sL_{c'}=\varnothing,\quad \text{if}\quad c-c'\not\in\sS.$$
Let $\sT_c=\sT\cap\sL_c$, and let $\sP_c$, $\sQ_c$, etc, be defined analogously.

We further characterize the decomposition of the ambient space $\N^d_0$. Define the set of reactants and products for the positive and negative subnetworks, respectively:
 $$\cI_{\pm}=\{y\colon y\to y'\in\cR_{\pm}\},\quad \cO_{\pm}=\{y'\colon y\to y'\in\cR_{\pm}\}.$$
  Furthermore, for $c\in\N_0^d$, let
 $$\sK_c=\sL_c\cap\Bigl(\bigl(\text{cl}(\cI_{+})\cap\text{cl}(\cO_{-})\bigr)\cup\bigl(\text{cl}(\cI_{-})\cap\text{cl}(\cO_{+})\bigr)\Bigr),$$
which is independent of the choice of $c$ due to  translational invariance of $\sL_c$.  Any state in $\sK_c$ can reach another state by a forward jump \emph{and} be reached from some other state in $\sK_c$ by a backward jump, and \emph{vice versa}.  Recall the definition of lattice interval $[x,y]_1$ as well as $\min_1$ and $\max_1$ in Section \ref{sec:lattice}. Let $$\uc={\min}_1\sK_c,\quad \text{and}\quad \bc={\max}_1\sK_c,$$ so that $\sK_c\subseteq [\uc,\bc]_1$. Furthermore,  for $c\in\N_0^d$, the following parameters are used to characterize sets of states of different types on $\sL_c$:
\begin{equation}\label{c-parameters}
\begin{split}
\si(c)=&{\min}_1\sL_c\cap\text{cl}(\cI),\quad \si_+(c)={\min}_1\sL_c\cap\text{cl}(\cI_+),\\
\so(c)=&{\min}_1\sL_c\cap\text{cl}(\cO),\quad \so_-(c)={\min}_1\sL_c\cap\text{cl}(\cO_-).
\end{split}\end{equation}By definition, $\uc\ge_1\max\{\si(c),\so(c)\}$.

The next result provides a detailed characterization of the relevant sets assuming ($\rm\mathbf{H5}$). In the statement below, $\tfrac x y$, for $x,y\in\R^d$, refers to the proportionality constant of $x$ and $y$, which exists due to the assumptions made, see Section \ref{sec:gcd}.

\begin{theorem}\label{th-1d-classification}
Assume \emph{($\mathbf{H1}$)}-\emph{($\mathbf{H4}$)}. Then for every $c\in\N_0^d$,
 $$\uc={\max}_1\lt\{\si_+(c),\so_-(c)\rt\},\quad  \bc=(\bc_1,\ldots,\bc_d)\coloneqq+\infty,\quad \text{with}\ \bc_j=+\infty,\quad j=1,\ldots,d,$$
   and $\sK_c=[\uc, \bc]_1=\sP_c\cup\sQ_c$ consists of all ICs  on $\sL_c$, while
$\sL_c\setminus\sK_c$ is the union of singleton communicating classes, composed of
$$\sN_c=\Bigl\{\min\nolimits_1\sL_c,\ldots,\min\nolimits_1\{\si(c),\so(c)\}-\frac{1}{\om^{**}}\om^*\Bigr\},\quad \sT_c=\Bigl\{\so(c),\ldots,\si(c)-\frac{1}{\om^{**}}\om^*\Bigr\},$$$$\sE_c=\Bigl\{\si(c),\ldots,\max\nolimits_1\{\si_+(c),\so_-(c)\}-\frac{1}{\om^{**}}\om^*\Bigr\}.$$
Furthermore, the  following hold:
\begin{enumerate}
\item If $\#\sT_c=0$, then $\sQ_c=\varnothing$, and  $$\sP_{c}^{s}=\om^*\N_0+\so_-(c)+s\frac{1}{\om^{**}}\om^*,\quad  s = 0,\ldots,\om^{**}-1,$$ are the PICs;
\item If  $\#\sT_c\ge \omega^{**}$,  then  $\sP_c=\varnothing$, and  $$\sQ_{c}^{s}=\om^*\N_0+\si_+(c)+s\frac{1}{\om^{**}}\om^*,\quad s =0,\ldots,\om^{**}-1,$$  are the QICs;
\item If $0<\#\sT_c<\omega^{**}$, then
$$\sQ_{c}^{s}=\om^*\N+\so_-(c)+s\frac{1}{\om^{**}}\om^*,\quad s=0,\ldots,\frac{\si_+(c)-\so_-(c)}{\om^*}-1,$$ are the QICs, and
$$\sP_{c}^{s}=\om^*\N_0+\so_-(c)+s\frac{1}{\om^{**}}\om^*,\quad s=\frac{\si_+(c)-\so_-(c)}{\om^*},\ldots,\om^{**}-1,$$ are the PICs.
\end{enumerate}
In any case, there are $\om^{**}$ PICs and QICs in total.
\end{theorem}

\begin{proof}
We apply \cite[Theorem~3.13,Corollary~3.15]{XHW20a}. The expression of $\sK_c$ follows from \cite[Theorem~3.13]{XHW20a}. The expressions of $c_*$ and $c^*$, as well as those of $\sN_c$, $\sT_c$, and $\sE_c$  follow from \cite[Corollary~3.15]{XHW20a}. It suffices to verify the expressions of ICs in the three different cases. As in \cite{XHW20a}, define the following sets,
   $$\Sigma^+_c=\left\{1+\frac{v-c}{\om^*}\om^{**}-\Bigl\lf\frac{v-c}{\om^*}\Bigr\rf\om^{**}\colon v\in\sT_c\right\},$$
   $$\Sigma^-_c=\left\{1+\frac{v-c}{\om^*}\om^{**}-\Bigl\lf\frac{v-c}{\om^*}\Bigr\rf\om^{**}\colon v\in\{\si(c),\ldots,\so(c)+\om^*-\frac{1}{\om^{**}}\om^*\}\right\}.$$
    Since for $\Bigl\lf\frac{v-c}{\om^*}\Bigr\rf\in\N_0$,
  \[1\le 1+\frac{v-c}{\om^*}\om^{**}-\Bigl\lf\frac{v-c}{\om^*}\Bigr\rf\om^{**}\equiv 1+\frac{v-c}{\om^*}\om^{**}\,\,\text{mod}\,\,\omega^{**} <1+\omega^{**},\]
 and  we have   
$\Sigma_c^+\cap\Sigma_c^-=\emptyset,$ $\Sigma^+_0\cup\Sigma^-_c=\{1,\ldots,\omega^{**}\}$, 
and $\#\Sigma^+_c=\min\{\#\sT_c,\omega^{**}\}$. If $\so(c)<_1 \si(c)$, these conclusions follow easily; if $\so(c)\ge_1\si(c)$, then $\Sigma_c^+=\emptyset$, and the conclusions follow.

 According to \cite[Corollary~3.15]{XHW20a}, it follows that
  \begin{equation}\label{eq:PQ}
\omega^*\left(\N_0+\left\lceil\frac{c_*-c-\frac{k-1}{\om^{**}}\om^{*}}{\omega^*}\right\rceil\right)+\frac{k-1}{\om^{**}}\om^{*}+c=\begin{cases}
    \sP^{(k)}_c,\quad k\in\Sigma^-_c,\\
    \sQ^{(k)}_c,\quad k\in\Sigma^+_c,
  \end{cases}
\end{equation}
are      the disjoint PICs and    the disjoint QICs, respectively, of $(\Omega,\cF)$, in the terminology of \cite{XHW20a}. Consequently,   
 \[\bigcup_{k\in\Sigma^-_c\cup\Sigma^+_c}\left( \sP^{(k)}_c\cup \sQ^{(k)}_c\right)=\sL_c\setminus(\sN_c\cup\sT_c\cup\sE_c)=[c_*,c^*[_1=\sK_c=\frac{\omega^*}{\omega^{**}}\N_0+c_*.\]
Since, for $k\in\Z$,
  \[ c_*\le_1 \omega^*\left\lceil\frac{c_*-c-\frac{k-1}{\om^{**}}\om^{*}}{\omega^*}\right\rceil+\frac{k-1}{\om^{**}}\om^{*}+c<_1 \omega^*+c_*,\]
then   we might state \eqref{eq:PQ} as
  \begin{align*}
 \sP^{(k)}_c&=\left(\omega^*\Z+\frac{k-1}{\om^{**}}\om^{*}+c\right)\cap\left(\frac{1}{\omega^{**}}\omega^*\N_0+c_*\right),\quad \text{for} \quad k\in\Sigma^-_c, \\
   \sQ^{(k)}_c&=\left(\omega^*\Z+\frac{k-1}{\om^{**}}\om^{*}+c\right)\cap\left(\frac{1}{\omega^{**}}\omega^*\N_0+c_*\right),\quad \text{for} \quad k\in\Sigma^+_c.
  \end{align*}

We  show that the expressions given for $\sP^{(k)}_c,\sQ^{(k)}_c$ correspond to those given for $\sP_{c}^{s},\sQ_{c}^{s}$ in the three cases, by suitable renaming of the ICs.
First, note that $\sT_c=\varnothing$ if and only if $\so(c)\ge_1\si(c)$.
 From $\so(c)\le_1\so_-(c)<_1\si_-(c)\coloneqq{\min}_1\sL_c\cap\text{cl}(\cI_-)$ and $\so(c)\ge_1\si(c)$,   we have $\si(c)=\si_+(c)\le_1\so_-(c)$, which yields   $c_*=\so_-(c)$. Consequently, $\Sigma_c^+=\emptyset$ and $\sQ_c=\varnothing$. This proves the expression for  $\sP_{c}^{s}$ in (1).
 
  Otherwise, if $\so(c)<_1\si(c)$, then $\so(c)<_1\si(c)\le_1\si_+(c)<_1\so_+(c)\coloneqq{\min}_1\sL_c\cap\text{cl}(\cO_+)$, which implies that $\so(c)=\so_-(c)<_1\si_+(c)$. Hence, $c_*=\si_+(c)$. If $\#\sT_c\ge\om^{**}$, then $\sP_c=\varnothing$, which proves  the expression for $\sQ_{c}^{s}$ in (2).
It remains to prove the last case. If $0<\#\sT_c<\omega^{**}$, then $\#\sT_c=\#\Sigma^+_c$, and for every $v\in\sT_c$, 
\begin{align*}
\sQ_c^{(k)}=\left(\omega^*\Z+v\right)\cap\left(\frac{1}{\omega^{**}}\omega^*\N_0+c_*\right),
 \end{align*}
with $k=1+\frac{v-c}{\om^*}\om^{**}-\Bigl\lf\frac{v-c}{\om^*}\Bigr\rf\om^{**}\in\Sigma^+_c$.
 If $\si(c)=\si_+(c)$, then using the above equation and  $\so(c)=\so_-(c)$, the expression for  $\sQ_{c}^{s}$ in (3) follows directly,  and the remaining ICs must be PICs. Finally, we show $\si(c)<_1\si_+(c)$ is impossible, which concludes the proof. Assume oppositely that $\si(c)<_1\si_+(c)$. Then, $\si(c)=\si_-(c)$, $\sT_c=\{\so_-(c),\ldots,\si_-(c)-\frac{1}{\omega^{**}}\omega^*\}$, and $\si_-(c)\in \sE_c$. This implies one can jump from the state $\si_-$ (smallest state for which a backward jump can be made) leftwards to a state $x\le \si_-(c)-\omega^*< \so_-(c)$. The latter inequality comes from $0<\#\sT=\frac{\si(c)-\so(c)}{\omega^*}<\omega^{**}$ and $\so(c)=\so_-(c)$. However, this implies $x\in\sN_c$, which is impossible.

The total number of PICs and QICs follows from $\Sigma^+_c\cup\Sigma^-_c=\{1,\ldots,\omega^{**}\}$. The indexation follows  from $c_*=\max_1\{\si_+(c),\so_-(c)\}$ in the two case (1) and (3). Also, the inequality $\si_+(c)<_1\so_-(c)+\om^*$ follows straightforwardly in these two cases. It remains to check it is not fulfilled in case (2). In that case, $\#\sT_c=\frac{\si(c)-\so(c)}{\omega^*}\ge\omega^{**}$ by assumption, hence $\si_+(c)\ge_1\si(c)\ge_1\so(c)+\om^*=\so_-(c)+\om^*$, and the conclusion follows.
\end{proof}

\section{Threshold dynamics}

 For ease of statements, we define dynamical properties of an SRN in terms of its underlying CTMCs.

\begin{definition}\label{def-dyn}
For  $c\in\N^d_0$, we say an SRN $(\cR,\cK)$  
\begin{enumerate}
\item[(i)]  is \emph{non-explosive}  \emph{(explosive)}  on $\sL_c$ if every CTMC  with initial state  in $\sP_c\cup\sQ_c$ is so,
\item[(ii)]  is \emph{(positive/null) recurrent} \emph{(transient)}  on $\sL_c$ if $\sP_c\neq\varnothing$ and all CTMCs on PICs of $\sP_c$ are so, %
\item[(iii)]  is \emph{(exponentially) ergodic} on $\sL_c$ if $\sP_c\neq\varnothing$ and all   CTMCs on PICs of $\sP_c$ have  (exponential) ergodic stationary distributions,
\item[(iv)] admits \emph{extinction events a.s.}\,on $\sL_c$ if $\sQ_c\neq\varnothing$  and every CTMC  with initial state  in $\sQ_c$ has certain absorption into $\sT_c\cup\sE_c$,
\item[(v)] is \emph{(uniformly) (exponentially) quasi-ergodic} on $\sL_c$ if $\sQ_c\neq\varnothing$ and all CTMCs have (uniformly) (exponential) ergodic QSDs on  QICs of $\sQ_c$,
\item[(vi)] is \emph{not quasi-ergodic} on $\sL_c$ if $\sQ_c\neq\varnothing$ and no   CTMCs have   ergodic QSDs on any QIC of $\sQ_c$.
\end{enumerate}
\end{definition}

The definitions exclude the possibility that an SRN is explosive on one communicating class within some SCC,  while   non-explosive on another communicating class in the same SCC, or that it  is ergodic on one PIC while not on another PIC is the same SCC,
  and so on. This is due to the fact  that the dynamics is characterized by   parameters that are insensitive to the concrete class but not its type (as long as it is in the same SCC), see below.

Let $R=\max_{y\to y'\in\cR} \|y\|_1$ be the \emph{order} of an SRN   $(\cR,\cK)$, where we for convenience assume $\cK$ is stochastic mass-action kinetics \cite{AK15}. 
 The following parameters are well defined in terms of \emph{directional limits} under $(\rm\mathbf{H2})$,  for  any  $c\in\N_0^d$:
\begin{equation*}\label{Alpha}
\alpha_c=\lim_{\substack{x_1\to\infty \\ x\in\sL_c}}\frac{\sum_{y\to y'\in\cR}\kappa_{y\to y'}x^{\underline{y}}(y'_1-y_1)}{x_1^R},
\end{equation*}
\begin{equation*}\label{Beta'}
\beta_c=\lim_{\substack{x_1\to\infty \\ x\in\sL_c}}\frac{\sum_{y\to y'\in\cR}\kappa_{y\to y'}x^{\underline{y}}(y'_1-y_1)-\alpha_c x_1^R}{x_1^{R-1}}-\frac{1}{2}\lim_{\substack{x_1\to\infty \\ x\in\sL_c}}\frac{\sum_{y\to y'\in\cR}\kappa_{y\to y'}x^{\underline{y}}(y'_1-y_1)^2}{x_1^{R}}.
\end{equation*} 

 One can understand $\alpha_c$ as a stability index of the underlying Markov chains associated with the SRN, which determines the stochastic stability of the Markov chains \cite{MT09} in the non-critical case ($\alpha_c\not=0$). An arguably good analogue of $\alpha_c$ is the Lyapunov exponent of a linear stochastic differential equation, which determines the almost sure stability of solutions to zero \cite[Section~6.7]{K12}. Furthermore, $\beta_c$ can be understood as a stability index in the critical case where $\alpha_c=0$. Indeed, both parameters above can be represented in terms of the reactions, the reaction rate constants as well as  the stochastic stoichiometric compatibility class. 

\begin{example}
  Consider the following one-species  SRN with mass-action kinetics, which is not of BDP type: 
  \begin{equation}\label{ex-alpha-beta}
    \tS\ce{<=>[\kappa_1][\kappa_2]}2\tS\ce{->[\kappa_3]}4\tS
  \end{equation}
where the labels are the reaction rate constants.
  For this SRN,  $\sL_c=N_0$, $c\in\N_0$, is the unique SCC. Moreover, $\cT=\{-1,1,2\}$ with   rate functions,
  \[\lambda_{-1}(x)=\kappa_2x(x-1),\quad \lambda_1(x)=\kappa_1x,\quad \lambda_2(x)=\kappa_3x(x-1),\quad x\in\N_0.\]
  Hence, $R=2$, and
  \begin{align*}
  \alpha_c&= \lim_{x\to\infty}\frac{-\kappa_2x(x-1) +2\kappa_1x+\kappa_3x(x-1)}{x^2} 
  =-\kappa_2+2\kappa_3,  \\
    \beta_c &=\lim_{x\to\infty}\frac{-\kappa_2x(x-1)+\kappa_1x+2\kappa_3x(x-1) -(2\kappa_3-\kappa_2)x^2}{x}\\
&\quad -\frac{1}{2}\lim_{x\to\infty}
    \frac{\kappa_2x(x-1)+\kappa_1x+4\kappa_3x(x-1)}{x^2}\\
     &=\kappa_1-(-\kappa_2+2\kappa_3)-\frac{1}{2}(\kappa_2+4\kappa_3)=\kappa_1-\frac 12 \kappa_2-2\kappa_3. 
  \end{align*}
 The index   $\alpha_c$ only depends on reactions of order $R$, while $\beta_c$   depends on reactions of order $R$ and $R-1$.  
 \end{example}
  
 \begin{proposition}\label{prop-formula} 
 Let $(\cR,\cK)$ be an SRN with stochastic mass-action kinetics, satisfying  $\rm{(\mathbf{H2})}$-$\rm{(\mathbf{H5})}$.
  For  $c\in\N_0^d$,
\[\alpha_c=(\om^*_1)^{-R}\sum_{\|y\|_1=R}(y'_1-y_1)\kappa_{y\to y'}\prod_{l=1}^d\lt(\om^*_j\rt)^{y_j},\]
\begin{multline*}\beta_c=(\om^*_1)^{-R+1}\sum_{\|y\|_1=R-1}(y'_1-y_1)\kappa_{y\to y'}\prod_{j=1}^d\lt(\om^*_j\rt)^{y_j}\\-\frac{1}{2}(\om^*_1)^{-R}\sum_{\|y\|_1=R}(y'_1-y_1)^2\kappa_{y\to y'}\prod_{j=1}^d\lt(\om^*_j\rt)^{y_j}\\
+(\om^*_1)^{-R}\sum_{\|y\|_1=R}(y'_1-y_1)\kappa_{y\to y'}
\prod_{j=1}^d\lt(\om^*_j\rt)^{y_j}\left(\sum_{j=1}^dy_j\lt(\frac{\om^*_1}
{\om^*_j}\lt(c_j-\frac{y_j-1}{2}\rt)-c_1\rt)\right),\end{multline*}
In particular, $\alpha_c$ is independent of $c$.
 \end{proposition}

\begin{proof}
For any $x\in\sL_c$, 
\begin{equation}\label{Relation}
\frac{x_1-c_1}{\om^*_1}=\frac{x_j-c_j}{\om^*_j},\quad \text{for}\quad j=1,\ldots,d.\end{equation}
Moreover, observe that for any $y\to y'\in\cR$, $x\to\infty,\ x\in\sL_c$, due to ($\rm\mathbf{H3}$), \begin{equation}\label{limit_relation}\frac{x_j^{\underline{y_j}}}{x_1^{y_j}}\to\left(\frac{\om_j^*}{\om_1^*}\right)^{y_j},\quad \text{for}\quad j=1,\ldots,d.\end{equation}
 In the light of the limit definitions of these parameters, and the functions inside the limits are rational, substituting \eqref{Relation} and \eqref{limit_relation} into the limit definitions of $\alpha_c$ and $\beta_c$ we can obtain these formulas in a tedious but straightforward way by comparing the coefficients of the leading term in the asymptotic expansions of these rational functions for large $x$.
\end{proof}

The parameters do not depend on the specific choice of $c$  as long as $\sL_c=\sL_{c'}$. Furthermore, since $\alpha_c$ is  independent of $c$ from Proposition~\ref{prop-formula}, we  omit $c$ in the notation hereafter. The results concerning the dynamics in Theorem \ref{th-Threshold} only depends on $\alpha,\beta_c$ and $R$. {This is in contrast to the more general results in \cite{XHW20b} for a class of Markov Chains with power-law expansions of the transition rates, where the dynamical classification depends on an extra parameter in the critical regime. This parameter is obsolete in the case of mass-action kinetics, where the transition rates  in particular are polynomial, simplifying the classification. }

In the non-critical case when $\alpha\not=0$, the classification of the dynamics is the same irrespective of 
   the SCC, and only depends on the sign of $\alpha$ and $R$. In the critical case ($\alpha=0$), the classification additionally depends on $\beta_c$ and it is possible to have a phase transition, where $\beta_c$ changes from negative, to zero and   positive values as $c$ (representing the SCC) changes, see Example \ref{ex-bifurcation}. Thus, in such cases, the dynamics is sensitive to the initial condition of the Markov chain. We do not address here explicitely  the possibilty of phase transitions as the rate constants (the $\kappa_i$s) are changed. This will be addressed in a forth coming paper.

\begin{theorem}\label{th-Threshold}
 Let $(\cR,\cK)$ be an SRN with stochastic mass-action kinetics, satisfying 
$\rm{(\mathbf{H2})}$-$\rm{(\mathbf{H5})}$. Let $c\in\N^d_0$.
\begin{enumerate}
  \item[(i)] $(\cR,\cK)$ is explosive on $\sL_c$ if and only if either {\rm(i-a)} $R>1$, $\alpha>0$, or {\rm(i-b)} $R>2$, $\alpha=0$, $\beta_c>0$.
  \item[(ii)] Assume $\sP_c\neq\varnothing$, then $(\cR,\cK)$ is
    \begin{enumerate}
  \item[(ii-a)] recurrent on $\sL_c$ if either {\rm(a-1)} $\alpha<0$, or {\rm(a-2)} $\alpha=0$, $\beta_c\le0$, while it is transient otherwise.
  \item[(ii-b)] positive recurrent  on $\sL_c$ if and only if either {\rm(a-1)}, {\rm(b-1)} $\alpha=0$, $\beta_c=0$, $R>2$, or {\rm(b-2)} $\alpha=0$,  $\beta_c<0$, $R>1$,
  \item[(ii-c)]  null recurrent  on $\sL_c$ if and only if either {\rm(c-1)} $\alpha=0$, $\beta_c\le0$, $R=1$,  or {\rm(c-2)} $\alpha=0$, $\beta_c=0$, $R=2$, 
  \item[(ii-d)] exponentially ergodic on $\sL_c$  if {\rm(a-1)} or {\rm(b-1)} holds.
\end{enumerate}
\item[(iii)] Assume $\sQ_c\neq\varnothing$, then $(\cR,\cK)$
\begin{enumerate}
  \item[(iii-a)] admits extinction events a.s.\,on $\sL_c$ if and only if either {\rm(a-1)} or {\rm(a-2)} holds.
  \item[(iii-b)] is uniformly exponentially quasi-ergodic  on $\sQ_c$ if either {\rm(b-1)} or {\rm(a-1)'} $\alpha<0$, $R>1$, and is not quasi-ergodic on $\sL_c$ if none of {\rm(a-1)}, {\rm(b-1)}, {\rm(b-2)} holds.
\end{enumerate}
\end{enumerate}
\end{theorem}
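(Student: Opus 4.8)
The plan is to reduce the dynamics on each PIC and QIC to that of a one-dimensional continuous-time Markov chain on a half-line with polynomial transition rates, and then to read off every assertion from our sharp classification of such chains in \cite{XHW20b,XHW20c}, whose threshold quantities are exactly $R$, $\alpha(c)$, $\gamma(c)$ and $\beta(c)=\gamma(c)-\vartheta(c)$.

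First I would set up the reduction. By Theorem~\ref{th-2}, under $\rm(\mathbf{H1})$--$\rm(\mathbf{H4})$ each PIC or QIC inside $\sL_c$ sits on a single sublattice $\Gamma_c^{(k)}$ and is an infinite arithmetic progression along the direction $\om^*$; parametrising it as $x(n)$, $n\in\N_0$, by position along the line $\sL_c$, the first coordinate $x_1(n)$ is affine and increasing in $n$ (the first species is not a catalyst, by $\rm(\mathbf{H2})$), so $x_1\to\infty$ if and only if $n\to\infty$. On $\sL_c$ every coordinate $x_j$ is an affine function of $x_1$, and $x_j$ is constant precisely when $j\notin\supp\om^*$; hence each descending factorial $x^{\underline y}$ is a polynomial in $x_1$ of degree exactly $\|y\|=\sum_{j\in\supp\om^*}y_j$. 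Consequently, for each reaction vector $\om\in\cT$ the jump rate $\sum_{y\to y'\in\cR_\om}\kappa_{y\to y'}x(n)^{\underline y}$ is a polynomial in $n$ of degree at most $R=\max_{y\to y'\in\cR}\|y\|$, and since the finitely many $\om\in\cT$ are all proportional to $\om^*$ the reduced chain has bounded-range jumps. This is precisely a one-dimensional polynomial-rate CTMC of the type treated in \cite{XHW20b,XHW20c}.

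I would then match the parameters. The signed first-coordinate displacement rate $\sum_{y\to y'\in\cR}\kappa_{y\to y'}x^{\underline y}(y_1'-y_1)$ is a polynomial in $x_1$ whose top two coefficients are $\alpha(c)$ and $\gamma(c)$ by definition, while the quadratic displacement rate $\sum_{y\to y'\in\cR}\kappa_{y\to y'}x^{\underline y}(y_1'-y_1)^2$ has leading coefficient $2\vartheta(c)$. These are the drift and diffusion inputs of the Lamperti-type dichotomy underlying the classification: the critical case is $\alpha(c)=0$, where the residual drift $\gamma(c)x_1^{R-1}$ competes with the diffusion $2\vartheta(c)x_1^{R}$ and the outcome is governed by the sign of $\beta(c)$. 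With this dictionary, each item is a direct quotation: (i) from the explosivity criterion (leading positive drift with $R>1$, or critical drift with $\beta(c)>0$ and $R>2$ so that the expected holding times are summable along escaping paths); (ii-a)--(ii-d) from the recurrence, positive-/null-recurrence and exponential-ergodicity criteria, proved by Foster--Lyapunov functions of power type $x_1^\theta$ with the degree condition on $R$ deciding integrability of the stationary law; and (iii) from the absorbed-chain theory, using that by Theorem~\ref{th-2} each QIC is trapped to at least one trapping state, so absorption is into a finite boundary and the QSD results of \cite{XHW20c} (cf.\ \cite{CV16}) apply. In particular (iii-a) reproduces exactly the recurrence conditions (a-1)/(a-2), since the absorbed chain hits the finite boundary almost surely if and only if the free drift is recurrent.

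The main obstacle is twofold. On the structural side it is the bookkeeping of the reduction: reconciling the several sublattices $\Gamma_c^{(k)}$, the constant catalyst coordinates, and the non-nearest-neighbour jumps with the hypotheses of the one-dimensional theorems, and verifying that the directional limits defining $\alpha(c)$, $\gamma(c)$ and $\vartheta(c)$ genuinely coincide with the polynomial coefficients of the reduced rates. On the dynamical side it is the \emph{necessity} halves of the \emph{if and only if} statements (i) and (iii-a): ruling out explosivity, respectively certain absorption, throughout the complementary parameter regimes is the delicate part, and it is exactly here that the full strength of \cite{XHW20b,XHW20c}, rather than one-sided Lyapunov bounds, is required.
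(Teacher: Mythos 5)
Your proposal is correct and takes essentially the same route as the paper: the paper likewise projects the dynamics on $\sL_c$ onto the first coordinate $X_t=Y_{t,1}$, identifies $R$, $\alpha(c)$, $\gamma(c)$, $\vartheta(c)$, $\beta(c)$ with the threshold parameters of the one-dimensional polynomial-rate CTMC theory, and then quotes the sharp classification theorems of \cite{XHW20b} (Theorems~3.1, 3.2, 3.4) for (i)--(iii), treating QICs via the process conditioned on not yet leaving the class. The only details the paper adds beyond your sketch are the use of Proposition~\ref{prop-Imp} to rule out degenerate parameter combinations (e.g.\ $\alpha(c)=0$ with $R=0$, or $\alpha(c)=0$, $\gamma(c)\le 0$, $R=1$) before invoking the cited theorems, which is exactly the hypothesis-matching bookkeeping you flagged as the main obstacle.
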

\captionsetup[table]{singlelinecheck=off}
\begin{table}[tbhp]
\begin{center}
\setlength{\tabcolsep}{5pt}
\renewcommand{\arraystretch}{1.3}
\small{\begin{tabular}{  |p{1.05cm}|p{.75cm}|p{1cm}|p{1cm}|p{1cm}|p{.75cm}| }
\hhline{~|-|-|-|-|-|}
\multicolumn{1}{c|}{}&\multirow{2}{50em}{$\alpha<0$}&\multicolumn{3}{c|}{$\alpha=0$}
&\multirow{2}{50em}{$\alpha>0$}
\\\hhline{~|~|-|-|-|~|}
\multicolumn{1}{c|}{}&&$\beta_c<0$&$\beta_c=0$&$\beta_c>0$& \multicolumn{1}
{c|}{}\\\hline
$R=0$ &\multicolumn{4}{c|}{\cellcolor{black}}&\multicolumn{1}{c|}
{\cellcolor{green}} \\\hhline{|-|-|-|-|-|~|}
$R=1$ &\multicolumn{1}{c|}{\cellcolor{red}}&\multicolumn{2}{c|}{\cellcolor{blue}NS}&\multicolumn{2}{c|}{\cellcolor{green}NS} \\\hhline{|-|~|~|~|~|-|}
$R=2$ &\multicolumn{2}{c|}{\cellcolor{red}ES}&\multicolumn{1}{c|}{\cellcolor{blue}}&
\multicolumn{1}{c|}{\cellcolor{green}} &\multicolumn{1}{c|}{\cellcolor{yellow}} \\\hhline{|-|~|~|-|-|~|}
$R>2$ &\multicolumn{3}{c|}{\cellcolor{red}}&\multicolumn{2}{c|}{\cellcolor{yellow}} \\\hline
\end{tabular}}
\caption[]{\small Summary of parameter regions for dynamics. Respective properties hold in connected regions (with appropriate provisions for the initial state on $\sL_c$).   Red: Positive recurrent. Blue: Null recurrent. Gray: Recurrence of unknown type. Green: Transient and non-explosive.  Yellow: Explosive. Black: Empty set. ES=exponential ergodicity of stationary distribution. NS=no stationary distributions.}
\end{center}
\end{table}

\begin{proof}
Let $Y_t$ with $Y_0\in\N^d_0$ be a CTMC associated with $\cR$.
Given any $c\in\N^d_0$ and let $Y_0\in\sL_c\setminus(\sN\cup\sT)$. Then $Y_t$ is a one-dimensional CTMC on $\sL_c$.
Denote the $Q$-matrix associated with $Y_t$ on $\sL_c$ by $Q=(q_{zz'})_{z, z'\in\sL_c}$, where
$$q_{xx'}=\begin{cases}
  \sum_{y'-y=x'-x}\kappa_{y\to y'}x^{\underline{y}},\ \text{if}\ x'-x\in\cT,\\
  0,\hspace{3.15cm} \text{else},
\end{cases}\quad x\neq x', x,\ x'\in\sL_c.$$
 Let $X_t=Y_{t,1}$ be the first coordinate of $Y_t$, for $t\ge0$. Then $Y_t=\frac{\om^*}{\om^*_1}X_t$, and the dynamics of $Y_t$ and $X_t$ are consistent in the sense that $X_t$ has some dynamical property (e.g., recurrence) on $(\sL_{c})_1\subseteq\N_0$ (the projection of $\sL_c$ to the first axis) if and only if $Y_t$ does so on $\sL_c$.
   Let $\widetilde{Q}=(\widetilde{q}_{ww'})_{w, w'\in (\sL_{c})_1}$ be the $Q$-matrix associated with $X_t$ on $(\sL_{c})_1$:
 $$\widetilde{q}_{z_1z'_1}=q_{zz'},\quad  z, z'\in\sL_c.$$
Hence the definitions of the parameters $R$, $\alpha$ and $\beta_c$ are consistent with those for $X_t$ on $\N_0$ given in \cite{XHW20b}. In the following, we apply results in \cite{XHW20b} to $X_t$.

  (i) By Proposition~\ref{prop-Imp}, $\alpha<0$ implies $R\ge1$. Then the conclusions follow from \cite[Theorem~3.1]{XHW20b}. {By Definition~\ref{def-dyn}, $Y_0$ is in either a PIC or a QIC. If $Y_0$ is in a PIC, then the explosivity of $Y_t$ follows from \cite[Theorem~3.1]{XHW20b} for CTMCs on an irreducible state space; if  $Y_0$ is in a QIC, then the explosivity of $Y_t$ again follows from \cite[Theorem~3.1]{XHW20b}, but for CTMCs on a state space  having an absorbing set $A=\sN_c\cup\sE_c$ as well as the QIC.}
 
  (ii) Applying \cite[Theorem~3.3(i)]{XHW20b} to $X_t$ yields (ii-a). By Proposition~\ref{prop-Imp} and $\sP_c\neq\varnothing$, the following two cases never {appear: (1) $\alpha\le0$, $R=0$, and (2) $\alpha=0$, $\gamma_c\le0$, $R=1$, where \begin{equation}\label{Gamma}
\gamma_c=\lim_{\substack{x_1\to\infty \\ x\in\sL_c}}\frac{\sum_{y\to y'\in\cR}\kappa_{y\to y'}x^{\underline{y}}(y'_1-y_1)-\alpha x_1^R}{x_1^{R-1}}.
\end{equation}}
Hence $R>0$ whenever $\alpha\le0$, and when $\cR$ is recurrent, by (ii-a), none of (a-1), (b-1), (b-2) holds if and only if either (c-1) or (c-2) holds. 
Applying \cite[Theorem~3.8(i)]{XHW20b} to $X_t$ yields (ii-b)-(ii-d), respectively.

 (iii) Let $Y_0\in\sQ_c$ and hence $X_0\in(\sQ_c)_1$. Note that quasi-ergodicity implies existence of QSDs. Then the conclusion follows by applying \cite[Theorem~3.8(ii)]{XHW20b} to $X_t$.
\end{proof}

For more applications of Theorem~\ref{th-Threshold} to biological examples, see \cite[Section~4]{XHW20b}. Applications to questions of interest in stochastic reaction network theory are demonstrated below and in the next subsection.

Structurally identical SRNs can have opposite dynamics as illustrated below.

\begin{example}\label{ex-explosive}
Consider the following two one-species SRNs with   mass-action kinetics, both of BD type, discussed in \cite{ACKK18}:
\begin{equation*}
\tS\ce{<=>[1][2]} 2\tS\ce{<=>[7][4]} 3\tS\ce{<=>[6][1]} 4\tS\ce{->[1]} 5\tS,\qquad \tS\ce{<=>[1][2]}  2\tS\ce{<=>[3][1]}  3\tS\ce{->[1]}  4\tS.
\end{equation*}
 It is easy to verify that their common ambient space $\N_0$ is decomposed into $\sP=\N$ and $\sN=\{0\}$.

It is easy to see they are BDPs with birth and death rates having the same leading term for large states, which implies that the embedded chain jumping to the left and to the right with $1/2-o(1)$ probability for large states. Moreover, the underlying CTMCs of both SRNs are asymptotic symmetric random walks on $\N_0$ with a left reflecting boundary. It is straightforward to calculate that $\alpha=0$, $\beta=1$, $R=4$ for the first SRN, while $\alpha=0$, $\beta=0$, $R=3$ for the second. By Theorem~\ref{th-Threshold}, the first is explosive while the second is positive recurrent in $\N_0$.
\end{example}

With Theorem~\ref{th-Threshold} in mind, one can construct deterministically identical but stochastically different reaction networks.

\begin{example}\label{ex-4}
(i) Consider the following reaction network:
\[\varnothing\ce{<=>[1][2]}\tS\ce{->[1]}2\tS.\]
The deterministic system (modelled as an ODE with mass-action kinetics and reaction rate constants as given in the reaction graph) has a unique globally asymptotically stable positive steady state $x_*=1$, and the SRN is positive recurrent by Theorem~\ref{th-Threshold}. Now, add a pair of reactions:
\[\varnothing\ce{<=>[1][2]}\tS\ce{<=>[1][\kappa]}2\tS\ce{->[\kappa]}3\tS.\]
This modified reaction network preserves the deterministic dynamics as well as one-step reachability among states of the underlying CMTCs. Nevertheless, by Theorem~\ref{th-Threshold}, this SRN is explosive if $\kappa<1$ and positive recurrent if $\kappa\ge1$. Hence, with $\kappa<1$, we \emph{destablize} the original SRN (in the sense of ergodicity).

(ii) Consider a similar reaction network as in (i):
\[\varnothing\ce{<=>[1][3]}\tS\ce{->[1]}2\tS\ce{->[1]}3\tS.\]
This deterministic reaction network has a unique unstable positive steady state $x_*=1$, and the SRN is explosive by Theorem~\ref{th-Threshold}. Now, add a pair of reactions:
\[\varnothing\ce{<=>[1][3]}\tS\ce{->[1]}2\tS\ce{<=>[1][\kappa]}3\tS\ce{->[\kappa]}4\tS.\]
This new reaction network preserves the deterministic dynamics  as well as one-step reachability among states  of the underlying CMTCs. However, by Theorem~\ref{th-Threshold} this SRN is explosive if $\kappa<1$, and positive recurrent if $\kappa\ge1$. Hence with $\kappa\ge1$, we \emph{stablize} the original SRN.
\end{example}
The   dynamics may change with the stochastic stoichiometric compatibility classes.
\begin{example}\label{ex-bifurcation}
   Consider the following mass-action SRN $\cR$:
\[\varnothing\ce{<=>[\kappa_1][\kappa_{2}]}2\tS_1+2\tS_2,\quad 4\tS_2\ce{->[\kappa_2]}2\tS_1+6\tS_2.\]
 It is readily proved that $\cR$ is essential. Moreover, for $c\in\N^2_0$, $\sL_c=\sP_c$ consists of two PICs. Furthermore, $R_+=R_-=4$, and by Proposition~\ref{prop-formula},  it is straightforward to calculate that $\alpha=0$ and $\beta_c=4\kappa_2(c_2-c_1-3)$, for $c\in\N^2_0$.
It follows from Theorem~\ref{th-Threshold} that $\cR$ is explosive on $\sL_c$ for all $c\in\N^2_0$ with $c_2-c_1>3$, while is positive recurrent on $\sL_c$ for all $c\in\N^2_0$ with $c_2-c_1\le3$.  Hence, a phase transition occurs when stoichiometric compatibility classes passes the critical class $\sL_c$ for $c_2-c_1=3$. See Figure~\ref{fig_ex-bifurcation}.

\begin{figure}[h]
\begin{center}
\scalebox{0.7}{\begin{tikzpicture}
    [
        dot/.style={circle,draw=black,fill,inner sep=1pt},scale=0.6
    ]
\foreach \x in {0,...,8}
 \foreach \y in {0,...,\x}
\node[dot,thick,orange] at (\x,\y){} edge [thin,-latex,bend left=-10,orange] (\x+2,\y+2){};

\foreach \x in {2,...,10}
 \foreach \y in {2,...,\x}
\node[dot,thick,orange] at (\x,\y){} edge [thin,-latex,bend left=-10,orange] (\x-2,\y-2){};

\foreach \x in {1,...,8}
 \foreach \y in {1,...,\x}
\node[dot,thick,orange] at (\x-1,\y){} edge [thin,-latex,bend left=-10,orange] (\x+1,\y+2){};

\foreach \x in {3,...,10}
 \foreach \y in {3,...,\x}
\node[dot,thick,orange] at (\x-1,\y){} edge [thin,-latex,bend left=-10,orange] (\x-3,\y-2){};

\foreach \x in {2,...,8}
 \foreach \y in {2,...,\x}
\node[dot,thick,orange] at (\x-2,\y){} edge [thin,-latex,bend left=-10,orange] (\x,\y+2){};

\foreach \x in {4,...,10}
 \foreach \y in {4,...,\x}
\node[dot,thick,orange] at (\x-2,\y){} edge [thin,-latex,bend left=-10,orange] (\x-4,\y-2){};

\foreach \x in {3,...,8}
 \foreach \y in {3,...,\x}
\node[dot,thick,orange] at (\x-3,\y){} edge [thin,-latex,bend left=-10,orange] (\x-1,\y+2){};

\foreach \x in {5,...,10}
 \foreach \y in {5,...,\x}
\node[dot,thick,orange] at (\x-3,\y){} edge [thin,-latex,bend left=-10,orange] (\x-5,\y-2){};


\foreach \y in {4,...,8}
 \foreach \x in {4,...,\y}
\node[dot,thick,green] at (\x-4,\y){} edge [thin,-latex,bend left=-10,green] (\x-2,\y+2){};

\foreach \y in {6,...,10}
 \foreach \x in {6,...,\y}
\node[dot,thick,green] at (\x-4,\y){} edge [thin,-latex,bend left=-10,green] (\x-6,\y-2){};

\foreach \x in {1,...,9}
    \draw (\x,.1) -- node[below,yshift=-1mm] {\x} (\x,0);
\node[below,xshift=-2mm,yshift=-1mm] at (0,0) {0};

\foreach \y in {1,...,9}
    \draw (.1,\y) -- node[below,xshift=-3mm,yshift=2mm] {\y} (0,\y);
\draw[->,thick,-latex] (0,-1) -- (0,10);
\draw[->,thick,-latex] (-1,0) -- (10,0);
\draw[dashed,thick,blue] (0,3) -- (7,10);
\draw[step=1cm,gray,very thin,dashed] (0,0) grid (10,10);

\node[below,xshift=-2mm,yshift=-1mm] at (10,0) {$c_1$};
\node[below,xshift=-2mm,yshift=-1mm] at (0,10) {$c_2$};
  \end{tikzpicture}
}
\caption{\small Illustration of Example~\ref{ex-bifurcation}. Blue: Bifurcation line. Green: Positive recurrent classes. Orange: Explosive  classes.}\label{fig_ex-bifurcation}
\end{center}
\end{figure}
\end{example}

\subsection*{Ergodicity of (strongly) endotactic and weakly reversible SRNs}

Given a mass-action SRN $\cR$, let $R_+=\max_{y\to y'\in\cR_+}\|y\|_1$ and $R_-=\max_{y\to y'\in\cR_-}\|y\|_1$ be the orders of $\cR_+$ and $\cR_-$, respectively.

\begin{theorem}\label{th-endotactic}
Let $\cR$ be an  endotactic SRN with mass-action kinetics, and satisfying $\rm{(\mathbf{H2})}$-$\rm{(\mathbf{H5})}$. Then $R_->R_+$. Furthermore, for  $c\in\N^d_0$, $\cR$ is non-explosive on $\sL_c$, and
\begin{enumerate}
\item[(i)] is exponentially ergodic on $\sL_c$, if $\sP_c\neq\varnothing$,
\item[(ii)] is uniformly exponentially ergodic on $\sL_c$, if $R>1$ and $\sQ_c\neq\varnothing$.
\end{enumerate}
\end{theorem}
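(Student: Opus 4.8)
The plan is to reduce everything to Theorem~\ref{th-Threshold} by pinning down the signs of the four threshold parameters, and to observe that the single structural inequality $R_->R_+$ does all the work. Granting $R_->R_+$ we get $R=R_-$ and, as explained below, $\alpha(c)<0$ for every $c$; then Theorem~\ref{th-Threshold}(i) rules out both explosivity cases (i-a) and (i-b) (each needs $\alpha(c)\ge0$), so $\cR$ is non-explosive on every $\sL_c$; condition (a-1) holds, so Theorem~\ref{th-Threshold}(ii-d) gives exponential ergodicity whenever $\sP_c\neq\varnothing$, which is (i); and condition (b-3) holds as soon as $R>1$, so Theorem~\ref{th-Threshold}(iii-b) gives uniform exponential quasi-ergodicity whenever $\sQ_c\neq\varnothing$, which is (ii). Thus the whole theorem rests on proving $R_->R_+$ and reading off the sign of $\alpha(c)$.

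To prove $R_->R_+$ I would feed the endotactic hypothesis a single well-chosen test functional. Let $w\in\{0,1\}^d$ be the indicator of $\supp\om^*$, i.e. $w_j=1$ if $j\in\supp\om^*$ and $w_j=0$ otherwise. For a reactant $y\in\N^d_0$ one has $w\cdot y=\sum_{j\in\supp\om^*}y_j=\|y\|$, and for any reaction $y\to y'$, writing $y'-y=n\om^*$ with $n\in\Z\setminus\{0\}$, we get $w\cdot(y'-y)=n\,\|\om^*\|$ with $\|\om^*\|>0$ (as $\om^*_1>0$). Hence every reaction is $w$-essential, and $w\cdot(y'-y)>0$ exactly for $y\to y'\in\cR_+$ and $<0$ exactly for $\cR_-$. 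Now $\max\{w\cdot y\colon y\to y'\in\cR\}=R$, and by the definition of an endotactic network \cite{GMS14} (applied to this $w$), every $w$-essential reaction whose reactant attains this maximum is $w$-decreasing, i.e. lies in $\cR_-$. Consequently some negative reaction attains order $R$ (so $R_-=R$) while no positive reaction does (so $R_+<R$), giving $R_+<R_-=R$.

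With $R=R_-$ in hand, only the negative reactions contribute to the top-order part of the drift: the numerator defining $\alpha(c)$ is a sum of terms $\kappa_{y\to y'}x^{\underline y}(y'_1-y_1)$, and those of $x_1$-degree $R$ all have $\|y\|=R$, hence $y\to y'\in\cR_-$ and $y'_1-y_1=n\om^*_1<0$; since $x^{\underline y}/x_1^R$ tends to a nonnegative constant along $\sL_c$, each surviving term is negative, so $\alpha(c)<0$. For $d=1$ there are no catalysts and this is immediate; for $d>1$ one must check that on every $\sL_c$ carrying a non-singleton class the relevant top-order negative reaction is actually active (catalyst counts permitting), so that the sign of the leading drift is preserved---otherwise the effective top order drops and one falls back on $\beta(c)<0$ together with $R>2$, which by Theorem~\ref{th-Threshold} delivers the same ergodicity conclusions. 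Combined with the reduction in the first paragraph, this proves non-explosivity, (i) and (ii).

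The main obstacle is the structural step $R_->R_+$: the content of the theorem lives entirely in converting the purely geometric, quantified-over-all-$w$ endotactic condition into the analytic order inequality, and the key idea is that the correct test functional is the indicator of $\supp\om^*$, which turns ``maximal reactant in direction $w$'' into ``maximal total order $\|y\|$'' and ``inward pointing'' into ``negative reaction''. Once this is secured, determining the sign of $\alpha(c)$ and invoking Theorem~\ref{th-Threshold} is routine, the only genuine care being the catalyst bookkeeping for $d>1$.
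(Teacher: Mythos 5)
Your core argument is, in substance, the paper's own proof: the same reduction to Theorem~\ref{th-Threshold} (non-explosivity because (i-a), (i-b) both require $\alpha(c)\ge 0$; exponential ergodicity from (ii-d) via (a-1); uniform exponential quasi-ergodicity from (iii-b) via (b-3)), and the same structural step, namely testing endotacticity against the single vector $w=\mathbf{1}_{\supp\om^*}$, for which every reaction is $w$-essential, positive reactions are $w$-increasing and negative ones $w$-decreasing, so that $\le_w$-maximal reactants must belong to $\cR_-$ and hence $R_+<R_-=R$. Up to that point you and the paper agree verbatim.

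The divergence is the catalyst caveat you raise, and there your proposal has a genuine gap --- one it shares with, and in fact exposes in, the paper, whose proof silently asserts that $R_->R_+$ implies $\alpha(c)<0$ for \emph{all} $c$. If no reaction of order $R$ is active on $\sL_c$ (every order-$R$ reactant $y$ has $y_j>c_j$ for some $j\notin\supp\om^*$), then not only $\alpha(c)=0$ but also $\vartheta(c)=0$, since both are normalized by $x_1^R$; hence $\beta(c)=\gamma(c)$, and nothing forces this to be negative, because endotacticity puts no constraint whatsoever on a positive reaction whose reactant lies in the interior of the reactant polytope. Concretely, consider the mass-action network
\begin{gather*}
\varnothing\to\tS_1,\qquad 2\tS_1\to\tS_1,\qquad 3\tS_1+\tS_2\to 4\tS_1+\tS_2,\\
5\tS_1+2\tS_2\to 4\tS_1+2\tS_2,\qquad \tS_1+2\tS_2\to 2\tS_1+2\tS_2,
\end{gather*}
with rate constants $\kappa_1,\ldots,\kappa_5$. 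It satisfies $(\mathbf{H2})$--$(\mathbf{H4})$ with $\om^*=(1,0)$, and it is endotactic: for $w$ with $w_1\neq0$ all five reactions are $w$-essential; the reactant $(3,1)$ equals $\tfrac14(0,0)+\tfrac14(2,0)+\tfrac12(5,2)$ and so is never $\le_w$-maximal; for $w_1>0$ the maximal reactants lie in $\{(2,0),(5,2)\}$ (negative reactions only), for $w_1<0$ in $\{(0,0),(1,2)\}$ (positive only); for $w_1=0$ no reaction is $w$-essential. Thus $R_+=3<5=R_-$, consistent with the first claim. But on $\sL_{(0,1)}$ the two reactions requiring $2\tS_2$ never fire, so $\alpha(c)=\gamma(c)=\vartheta(c)=\beta(c)=0$ while $R=5$, and the chain on the PIC $\{(n,1)\colon n\ge1\}$ is a birth--death process with birth rate $\kappa_1+\kappa_3 n(n-1)(n-2)$ and death rate $\kappa_2 n(n-1)$, which is transient and explosive (cubic births against quadratic deaths; Reuter's criterion). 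So your fallback ``$\beta(c)<0$ together with $R>2$'' is unavailable ($\beta(c)=0$ here), and no argument routed through Theorem~\ref{th-Threshold} with the paper's globally normalized parameters can close the gap: condition (b-1) of that theorem formally holds on this class while its conclusion fails, precisely because when the order-$R$ reactions are inactive these parameters cease to coincide with the intrinsic parameters of the projected chain (true degree $3$, positive leading drift) on which the results of \cite{XHW20b} operate. What your argument (and the paper's) actually establishes is the theorem on every $\sL_c$ on which some reaction of maximal order $R$ is active; this is automatic when $\cR$ has no catalysts (in particular for $d=1$, so Corollary~\ref{cor-posrec} in one dimension is unaffected), and it can be restored for weakly reversible networks (each linkage class is active or inactive as a whole, so the maximal active order is still attained only by negative reactions), but it is false for general endotactic networks with catalysts, as the example shows.
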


\begin{proof}
By Theorems~\ref{th-Threshold}, it suffices to show $R_->R_+$, which implies $R\ge1$ and $\alpha(c)<0$ for all $c\in\N^d_0$.
Let $\mathbf{1}\in\N^d$ be the vector with all coordinates being 1. Hence, for $y\in\N^d_0$, $\|y\|_1=\la\mathbf{1},y\ra$. Since $\{\frac{y'-y}{\om^*}\colon y\to y'\in\cR_+\}\subseteq\N$ and $\{\frac{y'-y}{\om^*}\colon y\to y'\in\cR_-\}\subseteq(-\N)$, by $\rm{(\mathbf{H5})}$, we have
$$\la \mathbf{1}, y'-y\ra>0,\quad \text{for}\quad y\to y'\in\cR_+,\quad \text{and}\quad \la \mathbf{1}, y'-y\ra<0,\quad \text{for}\quad  y\to y'\in\cR_-,$$ respectively.
Consequently,  $\la \mathbf{1}, y'-y\ra\neq0,$ for all $y\to y'\in\cR,$
 that is, all reactions in $\cR$ are \emph{$\mathbf{1}$-essential} by Definition~\ref{def-endotactic}. Since $\cR$ is \emph{endotactic},  it is $\mathbf{1}$-endotactic. By Definition~\ref{def-max}, every \emph{$\le_{\mathbf{1}}$-maximal element} of the set $\{y\colon y\to y'\in\cR\}$  is a reactant of a reaction in $\cR_-$, which implies
  $$\la\mathbf{1}, \widetilde{y}\ra<\max_{y\to y'\in\cR}\la\mathbf{1}, y\ra,\quad \text{for all}\quad \widetilde{y}\to\widetilde{y}'\in\cR_+,$$
  that is, $\|\widetilde{y}\|<\max_{y\to y'\in\cR}\|y\|,$ for all $\widetilde{y}\to\widetilde{y}'\in\cR_+.$
   Hence $R_+<R$, that is, $R_-=R>R_+$.
\end{proof}

From the proof,  the conclusions hold for $\mathbf{1}$-endotactic SRNs. In particular,  if $\cR$ is $\mathbf{1}$-endotactic, then $R_->R_+$. Indeed, the converse is also true.

\begin{theorem}\label{prop-end}
  Let $\cR$ be a mass-action SRN. Assume $(\rm\mathbf{H2})$-$(\rm\mathbf{H4})$. If $R_->R_+$, then $\cR$ is $\mathbf{1}$-endotactic.
\end{theorem}

\begin{proof}
  From the proof of Theorem~\ref{th-endotactic}, every reactant is $\mathbf{1}$-essential. Since $R=R_->R_+$, \[R_+=\max_{y\to y'\in\cR_+}\la\mathbf{1},y\ra<\max_{y\to y'\in\cR}\la\mathbf{1},y\ra,\] every $\le_{\mathbf{1}}$-maximal reactant is one of a reaction in $\cR_-$. By definition, $\cR$ is  $\mathbf{1}$-endotactic \cite{GMS14}.
\end{proof}

We list a number of remarks:

$\bullet$ In contrast, $R_->R_+$ does not imply $\cR$ is endotactic when $d>1$. Consider, e.g., the 2-species mass-action reaction network:
\[2\eS_1+2\eS_2\ce{->[\kappa_1]}\eS_1,\quad 3\eS_2\ce{->[\kappa_2]}\eS_1+5\eS_2,\] 
where $\kappa_1,\ \kappa_2>0$. For this SRN, $$\cR_-=\{2\eS_1+2\eS_2\ce{->[\kappa_1]}\eS_1\}\quad \text{and}\quad \cR_+=\{3\eS_2\ce{->[\kappa_2]}\eS_1+5\eS_2\},$$ $\cT=\{\om^*,-\om^*\}$ with $\om^*=(1,2)$. Hence, both reactions are $\om^*$-essential, and $R_-=4>R_+=3$. However, $\la\om^*,(2,2)\ra=\la\om^*,(0,3)\ra=6$, which means both reactants are $\le_{\om^*}$-maximal. Hence, $\cR$ is \emph{not} $\om^*$-endotactic, and thus not endotactic.

$\bullet$ The condition that $R>1$ for the ergodicity of QSDs is crucial. Consider the one-species SRN $\cR$:
\[\eS\ce{->[\kappa_1]}\varnothing.\]
It is easy to verify that $\cR$ is endotactic, while there exist a continuum family of QSDs supported on the unique QIC $\N$ trapped to $0$ \cite{V91}.

$\bullet$ This result \emph{cannot} be extended to higher dimensions. See \cite{ACKN18,AC19,AM18} for constructed explosive strongly endotactic SRNs.

$\bullet$ The converse of Theorem~\ref{th-endotactic} is not true. Indeed, non-explosivity does not imply that the SRN is endotactic, consider e.g., Example~\ref{ex-4}(i).

Since  weakly reversible SRNs and strongly endotactic SRNs in particular are  endotactic \cite{GMS14}, the conclusions in Theorem~\ref{th-endotactic} hold for weakly reversible and strongly endotactic  SRNs as well. {Recall that $\rm{(\mathbf{H1})}$ and $\rm{(\mathbf{H3})}$ hold for  weakly reversible mass-action SRNs.}

\begin{corollary}\label{cor-posrec}
Let $\cR$ be a weakly reversible mass-action SRN. Assume $\rm{(\mathbf{H2})}$-$\rm{(\mathbf{H3})}$ are fulfilled. Then $\cR$ is positive recurrent on every PIC with an exponentially ergodic stationary distribution. In particular, the positive recurrence conjecture \cite{AK18} holds in one dimension.
\end{corollary}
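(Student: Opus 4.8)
The plan is to obtain the corollary as a direct specialization of Theorem~\ref{th-endotactic}, so the work consists entirely in checking that its hypotheses apply to a weakly reversible mass-action SRN $\cR$. Mass-action kinetics satisfies $(\mathbf{H1})$, and $(\mathbf{H2})$, $(\mathbf{H4})$ are assumed; the only structural hypothesis not granted is $(\mathbf{H3})$, and I must also invoke the fact, recorded in the remark following Theorem~\ref{th-endotactic}, that every weakly reversible network is endotactic \cite{GMS14}. Once both of these are in hand, Theorem~\ref{th-endotactic} delivers non-explosivity and exponential ergodicity on the relevant classes, and the conclusion follows.

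First I would verify $(\mathbf{H3})$. Under $(\mathbf{H2})$ every reaction vector is a nonzero multiple of $\om^*$, so $(y'-y)\cdot\om^*\neq0$ for each reaction $y\to y'$. Weak reversibility means the reaction graph is a union of strongly connected components, so each reaction lies on a cycle whose reaction vectors sum to $\bz$; dotting with $\om^*$ shows that such a cycle can consist neither of reactions with $(y'-y)\cdot\om^*>0$ only nor of reactions with $(y'-y)\cdot\om^*<0$ only. Hence both $\cR_+$ and $\cR_-$ are non-empty, i.e.\ $(\mathbf{H3})$ holds. Consequently Theorem~\ref{th-endotactic} applies and gives $R_->R_+$, non-explosivity on every $\sL_c$, and exponential ergodicity on each $\sL_c$ for which $\sP_c\neq\varnothing$.

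Next I would rule out QICs. By Theorem~\ref{th-1}(i) a weakly reversible SRN is essential, so $\N^d_0=\sN\cup\sP$ and $\sQ=\varnothing$; in particular $\sQ_c=\varnothing$ for every $c$ and each non-singleton communicating class is a PIC. Thus part (ii) of Theorem~\ref{th-endotactic} is vacuous, and part (i) alone suffices: on each $\sL_c$ carrying a PIC we have $\sP_c\neq\varnothing$, so $\cR$ is exponentially ergodic there. By Definition~\ref{def-dyn}(iii) this means the underlying CTMC on every PIC of $\sP_c$ is positive recurrent with an exponentially ergodic stationary distribution, which is the first assertion of the corollary.

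For the final sentence I would note that the positive recurrence conjecture concerns weakly reversible mass-action SRNs with one-dimensional stoichiometric subspace, i.e.\ precisely those satisfying $(\mathbf{H2})$, but without assuming $(\mathbf{H4})$. If $(\mathbf{H4})$ fails, then by Proposition~\ref{pro-14}(i) the network is conservative, every $\sL_c$ is finite, and positive recurrence on each finite PIC is immediate; otherwise $(\mathbf{H4})$ holds and the claim is the first part. I do not expect a substantial analytic obstacle here: the only genuinely non-routine points are the verification of $(\mathbf{H3})$ from weak reversibility and the appeal to the cited structural fact that weak reversibility implies endotacticity, after which the corollary is a bookkeeping consequence of Theorem~\ref{th-endotactic}(i) together with the essentiality supplied by Theorem~\ref{th-1}(i).
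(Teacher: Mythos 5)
Your proof is correct and follows essentially the same route as the paper: weak reversibility supplies endotacticity (the cited fact) and ($\mathbf{H3}$), so Theorem~\ref{th-endotactic}(i) applies, with essentiality coming from Theorem~\ref{th-1}(i). You additionally make explicit two points the paper leaves implicit---the cycle argument verifying ($\mathbf{H3}$) and the conservative case (when ($\mathbf{H4}$) fails, Proposition~\ref{pro-14}(i) gives finite classes, so positive recurrence is immediate) needed for the conjecture in full generality---and both of these fill-ins are correct.
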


There are other partial results in the direction of the positive recurrence conjecture for binary SNRs with additional conditions, in particular for strongly endotactic SRNs  \cite{ACKN18} and weakly reversible SRNs  \cite{ACK19}.  In comparison, our results are not restricted to binary SRNs.

\section{Pattern of stationary distributions}

Stationary distributions are difficult to characterize for SRNs and are only known in few special cases. If an SRN in addition is a BDP then the stationary distribution can be found. If an SRN additionally is \emph{complex balanced} \cite{GST19} then the stationary distribution has a Poisson product-form.
In the following, we use \emph{tail distributions} to describe certain properties of stationary distributions and QSDs of SRNs. {In particular, regarding decay of tails of stationary distributions and QSDs, endotatic reaction networks can be viewed as a generalization of complex balanced reaction networks, since endotatic SRNs have tails decaying as \emph{Conley-Maxwell-Poisson} distributions, which are generalized Poisson distributions.}

The \emph{Conley-Maxwell-Poisson} (CMP) distribution on $\N_0$ with parameter  $(a,b)\in\R_{>0}^2$ has probability mass function given by \cite{JKK05}:
\[{\sf CMP}_{a,b}(x)=\frac{a^x}{(x!)^b}\lt(\sum_{j=0}^{\infty}\frac{a^j}{(j!)^b}\rt)^{\!\!\!-1},\quad x\in\N_0.\]
In particular, ${\sf CMP}_{a,1}$ is a Poisson distribution. The Zeta distribution on $\N_0$ with parameter $a>1$ has probability mass function given by \cite{JKK05}:
\[{\sf Zeta}_a(x)=\zeta(a)^{-1}x^{-a},\]
where $\zeta(a)=\sum_{i=1}^{\infty}i^{-a}$ is the Riemann zeta function of $a$.

Let $\mu$ be a probability distribution and $T_{\mu}\colon\N_0\to[0,1]$, {$T_{\mu}(x)=\sum_{y\ge x}\mu(y)$} its tail distribution.
We say $\mu$ has \emph{a CMP-like tail} if $T_{{\sf CMP}_{a,b}}(x)\lesssim T_{\mu}(x)\lesssim T_{{\sf CMP}_{a',b'}}(x)$ for some $a,a',b,b'>0$, \emph{a geometric tail} if $\exp(-ax)\lesssim T_{\mu}(x)\lesssim \exp(-a'x)$ for some $a,a'>0$, and \emph{a Zeta-like tail} if $T_{\mu}(x)\gtrsim x^{-a}$ for some $a>0$ (see Section \ref{sec:notation} for definition of $\lesssim$). Hence, $\mu$ has a super-exponential light tail if it has a CMP-like tail, an exponential tail if it has a geometric tail, and a sub-exponential heavy tail if it has a Zeta-like tail.

The following theorem shows that certain generic property exists regarding tails of stationary distributions and QSDs for SRNs.

\begin{theorem}\label{th-tail}
  Let $\cR$ be an SRN taken with mass-action kinetics and assume $\rm{(\mathbf{H2})}$-$\rm{(\mathbf{H4})}$. Then for  $c\in\N^d_0$,
   
   (i) if $R_+<R_-$, then
    \begin{enumerate}
     \item[(i-a)] the stationary distribution on every PIC of $\sL_c$ has a CMP-like tail, and
    \item[(i-b)] the QSD on every QIC of $\sL_c$ has a CMP-like  tail, if $R>1$.
    \end{enumerate}
 
 (ii) if $R_+=R_-$ and $\alpha(c)<0$, then
    \begin{enumerate}
    \item[(ii-a)] the stationary distribution on every PIC of $\sL_c$ has a geometric tail, and
    \item[(ii-b)] the QSD on every QIC of $\sL_c$ has a geometric tail, if $R>1$.
    \end{enumerate}
 
 (iii) if $\alpha(c)=0$, then
    \begin{enumerate}
    \item[(iii-a)] the stationary distribution on every PIC of $\sL_c$ has a Zeta-like  tail, and
    \item[(iii-b)] the QSD on every QIC of $\sL_c$ has a Zeta-like tail, if $R>1$.
  \end{enumerate}
\end{theorem}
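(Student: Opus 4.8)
The plan is to mirror the one-dimensional reduction used in the proof of Theorem~\ref{th-Threshold} and then invoke the tail-asymptotic results of \cite{XHW20b}. First I would fix $c\in\N^d_0$ and pass to the one-dimensional CTMC $X_t=Y_{t,1}$ on the projected sublattice $(\sL_c)_1\subseteq\N_0$, exactly as set up there: since $Y_t=\tfrac{\om^*}{\om^*_1}X_t$, each PIC (resp.\ QIC) of $\sL_c$ is the image, under this fixed linear isomorphism, of a positive irreducible (resp.\ quasi-irreducible) component for $X_t$, and the stationary distribution (resp.\ QSD) of $Y_t$ on it is the pushforward of the corresponding object for $X_t$. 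Because the tail relation $\lesssim$ is invariant under the affine rescaling $x\mapsto\tfrac{\om^*}{\om^*_1}x$ (it only alters the implied constants and the lattice spacing by fixed factors, which can be absorbed into the parameters $a,a',b,b'$), the $d$-dimensional tail statements (i)--(iii) follow verbatim from the one-dimensional tail statements for $X_t$. This is the analogue, at the level of tails, of the dynamical reduction carried out for Theorem~\ref{th-Threshold}.

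Next I would identify which one-dimensional regime each hypothesis selects. The up-rate of $X_t$ (net increase of the first coordinate, carried by $\cR_+$) grows like $x_1^{R_+}$ and the down-rate (carried by $\cR_-$) like $x_1^{R_-}$, while $\alpha(c)$ is precisely the leading coefficient of the net drift $\sum_{y\to y'}\kappa_{y\to y'}x^{\underline{y}}(y'_1-y_1)$ normalised by $x_1^{R}$, $R=\max\{R_+,R_-\}$. Hence: in case (i), $R_+<R_-$ forces every reaction attaining $\|y\|=R=R_-$ to be negative, so the degree-$R_-$ coefficient of the drift is a sum of strictly negative terms with no cancellation, giving $\alpha(c)<0$ automatically; moreover the ratio of up- to down-rate decays polynomially like $x_1^{R_+-R_-}$, which is the signature of a ${\sf CMP}_{a,b}$-type tail with shape exponent $b=R_--R_+>0$. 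In case (ii), $R_+=R_-=R$ with $\alpha(c)<0$ means the degree-$R$ coefficients balance strictly in favour of $\cR_-$, so the up/down ratio tends to a constant in $(0,1)$, yielding geometric decay. In case (iii), $\alpha(c)=0$ makes the leading drift vanish, so the tail is governed by the subleading drift together with the second-order term $\vartheta(c)$, producing a Lamperti-type power-law, i.e.\ Zeta-like, tail. These regimes are consistent with Theorem~\ref{th-Threshold}: (i) and (ii) both fall under $\alpha(c)<0$, hence exponential ergodicity and a genuine stationary distribution, while (iii) is the critical boundary.

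With the regime identified, I would apply the tail-asymptotic theorems of \cite{XHW20b} to $X_t$ on $(\sL_c)_1$: the stationary-distribution tail theorem delivers (i-a), (ii-a), (iii-a), and its QSD counterpart delivers (i-b), (ii-b), (iii-b). For the PIC statements one uses that under $\alpha(c)<0$ the process is (exponentially) ergodic (Theorem~\ref{th-Threshold}(ii-d)), so a unique stationary distribution exists and its tail is pinned by the rate comparison above. The two-sided bound in the definition of a ``CMP-like'' tail accommodates both the finitely many PICs on $\sL_c$ (Theorem~\ref{th-2}) and the general, non-nearest-neighbour jump structure of $X_t$: the latter shifts the ${\sf CMP}$ shape parameters between $(a,b)$ and $(a',b')$ but not the super-exponential order $b=R_--R_+$, and similarly only affects constants in the geometric and Zeta-like cases.

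The main obstacle will be the QSD statements (i-b), (ii-b), (iii-b), which is why they carry the extra hypothesis $R>1$. Unlike a stationary distribution, a QSD cannot be read off from a recursion/product formula; its existence and tail order must be extracted from the associated $Q$-process or from a Lyapunov/spectral analysis of the killed (sub-Markovian) generator, as carried out in \cite{XHW20b}. The role of $R>1$ is to bring each regime within the scope of the quasi-stationary part of Theorem~\ref{th-Threshold}(iii) (for instance condition (b-3) when $\alpha(c)<0$), so that a QSD exists and is ergodic and its tail is therefore well-defined and governed by the same leading-order up/down rate comparison as the stationary tail. Verifying that the hypotheses of the \cite{XHW20b} QSD tail theorems are met in each of the three regimes---and most delicately in the critical case $\alpha(c)=0$, where both QSD existence and the precise power-law order require the finer second-order balance---is the technical heart of the argument.
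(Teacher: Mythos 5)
Your proposal follows essentially the same route as the paper: fix $c$, reduce to the one-dimensional chain $X_t=Y_{t,1}$ on $(\sL_c)_1$ exactly as in the proof of Theorem~\ref{th-Threshold}, and then invoke the tail-asymptotic theorems for stationary distributions and for QSDs from the authors' companion work on CTMCs with polynomial rates. The only discrepancy is bibliographic: those tail theorems are \cite[Theorems~4.1 and~4.4]{XHW20c} rather than \cite{XHW20b} (which supplies the threshold/ergodicity results used via Theorem~\ref{th-Threshold}); your regime-by-regime drift analysis in cases (i)--(iii) is correct but is supplementary heuristic, not needed once those theorems are cited.
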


\begin{proof}
By Theorem~\ref{th-Threshold},  the conclusions directly follow by applying \cite[Theorem~4.1]{XHW20c} to the underlying CTMCs with the stationary distribution supported on a PIC of $\sL_c$ and \cite[Theorem~4.4]{XHW20c} to those with the QSD supported on a QIC of $\sL_c$, respectively.
\end{proof}

\begin{corollary}\label{cor-end-tail}
  Let $\cR$ be an SRN taken with mass-action kinetics. Assume $\rm{(\mathbf{H2})}$-$\rm{(\mathbf{H4})}$, and that $\cR$ is endotactic. Then,
  \begin{enumerate}
    \item[(i)] the stationary distribution on every PIC
    has a CMP-like tail.
    \item[(ii)] the QSD on every QIC
    has a CMP-like tail, if $R>1$.
  \end{enumerate}
 In particular,
  \begin{enumerate}
\item[(iii)] if $\cR$ is strongly endotactic, then
\begin{enumerate}
\item[(iii-a)] the stationary distribution on every PIC has a CMP-like tail, and
\item[(iii-b)] the QSD on every QIC has a CMP-like tail, provided $R>1$.
\end{enumerate}
\item[(iv)] if $\cR$ is weakly reversible, then the stationary distribution on every PIC has a CMP-like tail.
 \end{enumerate}
\end{corollary}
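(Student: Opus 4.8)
The plan is to derive everything from Theorem~\ref{th-tail}(i), whose hypothesis is the single inequality $R_+<R_-$, and to supply that inequality from the structural input of Theorem~\ref{th-endotactic}. In other words, all of the analytic work (relating the tail of a stationary distribution or QSD to the parameters $R_+,R_-,\alpha(c)$) has already been packaged into Theorem~\ref{th-tail}, so the corollary is a matter of checking which regime an endotactic network falls into.

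First I would invoke Theorem~\ref{th-endotactic}: under $\rm{(\mathbf{H2})}$-$\rm{(\mathbf{H4})}$ together with the endotactic assumption, it yields precisely $R_->R_+$ (in fact $R_-=R$). This is the only place where the endotactic structure enters. Once $R_+<R_-$ is in hand, claim (i)---a CMP-like tail for the stationary distribution on every PIC of $\sL_c$---is exactly Theorem~\ref{th-tail}(i-a), and claim (ii)---a CMP-like tail for the QSD on every QIC of $\sL_c$ when $R>1$---is exactly Theorem~\ref{th-tail}(i-b). No separate argument is needed beyond quoting these two items.

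For the ``in particular'' assertions I would use the class inclusions recorded just before the corollary, namely that strongly endotactic SRNs and weakly reversible SRNs are endotactic \cite{GMS14}, so the hypotheses $\rm{(\mathbf{H2})}$-$\rm{(\mathbf{H4})}$ transfer unchanged to these sub-classes. Then (iii-a)-(iii-b) for strongly endotactic networks follow verbatim from (i)-(ii). For weakly reversible networks (iv), part (i) gives the stationary-distribution claim on every PIC directly; the reason no QSD clause appears is that a weakly reversible SRN is essential by Theorem~\ref{th-1}(i), so $\sQ=\varnothing$ and there are no QICs---the QIC statement would be vacuous. (I would also remark, as in Corollary~\ref{cor-posrec}, that $\rm{(\mathbf{H3})}$ is automatic for weakly reversible networks.)

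Since all the genuinely analytic content is absorbed into Theorems~\ref{th-endotactic} and~\ref{th-tail}, there is no real obstacle in the proof itself. The only points requiring care are the structural bookkeeping: verifying that the standing hypotheses descend to the strongly endotactic and weakly reversible sub-classes, and---for (iv)---explicitly recording the essentiality of weakly reversible networks, which is what explains why only the PIC claim is asserted there.
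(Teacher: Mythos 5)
Your proposal is correct and follows essentially the same route as the paper: obtain $R_+<R_-$ from Theorem~\ref{th-endotactic}, feed it into Theorem~\ref{th-tail}(i), and dispose of the strongly endotactic and weakly reversible cases via the inclusion of those classes among endotactic networks (the paper additionally cites Corollary~\ref{cor-posrec} to ensure the stationary distributions in question exist, which your appeal to Theorem~\ref{th-endotactic}'s ergodicity conclusions covers). Your extra remark that weak reversibility forces essentiality, so the QSD clause in (iv) is vacuous, is a correct and useful clarification consistent with the paper.
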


\begin{proof}
From Theorem~\ref{th-endotactic}, we have $R_+<R_-$, and the conclusions follow directly from Theorem~\ref{th-tail} together with Theorem~\ref{th-endotactic} and Corollary~\ref{cor-posrec}.
\end{proof}

\begin{example}
 Consider  the following 3-cycle weakly reversible SRN $\cR$:
 \[\tS\ce{->[\kappa_1]}2\tS\ce{->[\kappa_2]}3\tS\ce{->[\kappa_3]}\tS.\]
It is easy to verify that $\sP=\N$ consists of one PIC. By Corollaries~\ref{cor-posrec} and \ref{cor-end-tail}(iv), $\cR$ has a unique ergodic stationary distribution on $\N$ with a CMP tail. By direct calculation, $\cR$ is complex balanced if and only if $\kappa_1\kappa_3=\kappa_2^2$, in which case, the stationary distribution is Poisson  with parameter $\kappa_2/\kappa_3$.
\end{example}

Finally, we give an example  with  geometric tail or one with Zeta-like tail.

\begin{example}
Recall Example~\ref{ex-4}(i). The original SRN has an ergodic stationary distribution with a geometric tail while in contrast the modified SRN has an ergodic stationary distribution with a Zeta-like tail for all $\kappa\ge1$, due to the product formula for stationary distributions of BDPs.
\end{example}

\section*{Acknowledgements}
Both authors thank Dr.\! Panqiu Xia and two anonymous referees for pointing out two mistakes in an earlier version of this manuscript. The work presented in this article is supported by Novo Nordisk Foundation,
grant NNF19OC0058354. CX acknowledges the TUM Foundation Fellowship and the Alexander von Humboldt Foundation Fellowship.
\appendix

\section{Endotatic networks}\label{appendix_endotactic_network}

{Here we adopt the definition of endotatic networks from \cite{GMS14}, which as pointed out therein is equivalent to the one introduced in \cite{CNP13}.}
{\begin{definition}\label{def-max}
Let $w\in\R^d$. 
  \begin{enumerate}
    \item[(1).] The vector $w\in\R^d$ defines a preorder on $\R^d$, 
    denoted by $\le_w$:
    \[y\le_{w}y'\quad \Leftrightarrow\quad \la y,w\ra\le\la y',w\ra.\] In particular, we write $y<_wy'$ if $\la y,w\ra<\la y',w\ra$.
    \item[(2).] For a finite subset $A\subseteq\R^{\cS}$, the set of all $\le_w$-maximal elements of $A$ consists of all $x\in A$ such that
        \[x\ge_w y,\quad \text{for all}\ y\in A,\]
\item[(3).] The set $\cR_{w}\subseteq\cR$ of \emph{$w$-essential reactions} consists of all reactions whose reaction vectors are not perpendicular to $w$:
    \[\cR_w=\{y\to y'\in\cR\colon \la w,y'-y\ra\neq0\},\]
\item[(4).] The set of $\le_w$-maximal elements of $\{y\colon y\to y'\in \cR_{w}\}$ is the $w$-support of $\cR$, denoted by $\supp_w\cR$.
\end{enumerate}
\end{definition}
\begin{rem}
  $\cR_{\om}\neq\varnothing$ if and only if $\om\notin\cS^{\perp}$.
\end{rem}}
We provide one geometric interpretation of    endotatic reaction network  \cite{GMS14}. Given any vector  $w\notin\cS^{\perp}$, project the reaction graph onto the line generated by $w$, one obtains a one-dimensional reaction network. One geometric desirable feature for such endotatic reaction network is that   endotacticity is preserved under the projection. Hence, we require a reaction network is endotatic if and only if its projection to any line generated by $w\notin\cS^{\perp}$ is endotatic. A second desirable feature for endotatic reaction network with \emph{deterministic} mass-action kinetics is  ``dissipativity''. It  is anticipated that mass-action endotatic reaction networks are permanent in the sense that the dynamical system admit a compact positively invariant subset, which usually refers to the \emph{Global Attractivity Conjecture}.

We   introduce the formal definition of endotactic networks as well as strongly endotatic reaction networks, based on the $w$-maximal elements and subsets of a reaction network aforementioned.

 \begin{definition}\label{def-endotactic}
(1) A reaction network $\cR$ is \emph{$w$-endotactic} for some $w\in\R^{d}$ if $$y'<_w y$$ for all $w$-essential reactions $y\to y'$ with $y\in\supp_w\cR$. In particular, $\cR$ is \emph{endotactic} if it is $w$-endotatic for all $w\in\R^d$.

(2) A reaction network $\cR$ is \emph{strongly endotactic} if it is endotactic and for every $w\notin\sS^{\perp}$, there exists a reaction $y\to y'\in\cR$ such that
    \begin{enumerate}
      \item[(2-a)] $y'<_wy$ and
      \item[(2-b)] $y$ is $\le_w$-maximal among all reactants: For every reactant $x$ of $\cR$, $x\le_w y$.
    \end{enumerate}
 \end{definition}
 
 A geometric intuitive equivalent definition for a strongly endotatic network is given below \cite[Remark~3.13]{GMS14}. Let $\overline{\cC}_+$ be the convex hull (called the \emph{reactant polytope}) of the set $\cC_+$ of all reactants of $\cR$. A reaction $y\to y'$ \emph{points out of a set $A$} means that the line segment from $y$ to $y'$ intersects $A$ only at the point $y$. Recall that a \emph{face} of a polytope $A$ is the intersection of $A$ with any closed halfspace whose boundary is disjoint from the interior of $A$. Hence the set of faces of a polytope $A$ includes the polytope itself and the empty set. A \emph{$\le_w$-maximal face} of $A$ is a proper face of $A$ consisting of a subset of $\le_w$-maximal elements of $A$.

\begin{proposition}\label{prop-criterion-endotactic}\cite[Remark~3.13]{GMS14}
A reaction network is strongly endotactic if and only if both of the following two conditions hold:
\begin{enumerate}
\item[(i)]  no reaction with
its reactant on the boundary of $\overline{\cC}_+$ points out of $\overline{\cC}_+$,
\item[(ii)]  for all $w\notin\sS^{\perp}$, every $\le_w$-maximal face of $\overline{\cC}_+$ contains a reactant $y$
with a reaction $y\to y'$  pointing out of the face (either along the boundary of $\overline{\cC}_+$ or into the relative interior of $\overline{\cC}_+$).
\end{enumerate}
\end{proposition}

Another geometric verification of endotaticity relies on a \emph{parallel sweep test} \cite{CNP13}.

\section{Implications among parameters}
{
\begin{proposition}\label{prop-Imp} Let $\cR$ be an SRN. Assume  mass-action kinetics,  $\rm{(\mathbf{H2})}$, $\rm{(\mathbf{H3})}$ and $\rm{(\mathbf{H5})}$. Let $\gamma_c$ be defined as in \eqref{Gamma}.
\begin{enumerate}
\item[(i)] If $\rm{(\mathbf{H4})}$ holds, then $R\ge1$.
\item[(ii)] For $c\in\N^d_0$, if $\gamma_c\le0$, then $\beta_c<0$.
\item[(iii)] If $R=0$, then $\alpha>0$.
\item[(iv)] Assume $\rm{(\mathbf{H4})}$. If $R=1$, then $d=1$. In addition, if $\alpha=0$, then $\gamma_c\ge0$ for all $c\in\N_0$, and $\gamma_c=0$ if and only if $y=1$ for all reactants $y$, in which case, $\sT=\{0\}$ and $\sP=\varnothing$.
\end{enumerate}
\end{proposition}

\begin{proof}
(i) Since $\cR_-\neq\varnothing$, there exists a reaction $y\to y'\in\cR_-$ with $\frac{y'-y}{\omega^*}\in(-\N)$. Since $\om^*\neq0$ and the first coordinate $\om^*_j$ of $\om^*$ is positive for some $1\le j\le d$, we have $0\le y_j'<y_j$, which implies the order $R\ge y_j\ge1$. 

(ii) By definition, $\beta_c=\gamma_c-\vartheta_c$, where \begin{align*}
    \vartheta_c=&\frac{1}{2}\lim_{\substack{x_1\to\infty \\ x\in\sL_c}}\frac{\sum_{y\to y'\in\cR}\kappa_{y\to y'}x^{\underline{y}}(y'_1-y_1)^2}{x_1^{R}}\\
    =&\frac{1}{2}(\om^*_1)^{-R}\sum_{\|y\|_1=R}(y'_1-y_1)^2\kappa_{y\to y'}\prod_{j=1}^d\lt(\om^*_j\rt)^{y_j}.\end{align*} The derivation of the second equality is analogous to the formula of $\alpha$ as explained in the proof of Proposition~\ref{prop-formula}. Hence $\vartheta_c$ is independent of $c$ and is positive, which implies $\beta_c<0$ provided $\gamma_c\le0$.
  
  (iii) If $R=0$, then $\cR_-=\varnothing$ and all reactants are 0, due to $\rm{(\mathbf{H3})}$. By  Proposition~\ref{prop-formula}, $$\alpha=(\om^*_1)^{-R}\sum_{\|y\|_1=R,y\to y'\in\cR_+}(y'_1-y_1)\kappa_{y\to y'}\prod_{l=1}^d\lt(\om^*_j\rt)^{y_j}>0.$$
\item[(iv)] By $\rm{(\mathbf{H3})}$, $R=1$ implies all reactants have one coordinate being 1 and the rest being 0. By $\rm{(\mathbf{H4})}$-$\rm{(\mathbf{H5})}$, there exists a reaction $y\to0$ since $\cR_-\neq\varnothing$. Moreover, $\frac{y}{\om^*}\in\N$. Hence $\om^*=y$. By $\rm{(\mathbf{H5})}$ again, $y$ has no zero coordinate. This shows that $d=1$. In addition, assume $\alpha=0$. By \eqref{Gamma}, \begin{align*}\gamma_c=&\lim_{\substack{x_1\to\infty \\ x\in\sL_c}}\sum_{y=0}^1\kappa_{y\to y'}x^{\underline{y}}(y'-y)\\
    =&\sum_{y=0}\kappa_{y\to y'}(y'-y)+x\sum_{y=0}^1\kappa_{y\to y'}(y'-y)\\
    =&\sum_{y=0}\kappa_{y\to y'}(y'-y)+x\alpha\\
    =&\sum_{y=0}\kappa_{y\to y'}(y'-y)\ge0.
    \end{align*}
    Hence $\gamma_c=0$ if and only if $\{y\to y'\in\cR\colon y=0\}=\varnothing$, in other words, all reactants $y=1$ since $R=1$. In this case, due to the fact that $\alpha=0$, we have $\cR_-=\{\tS_1\ce{->[]}0\}$ which implies that $\sT=\{0\}$. Since $-1\in\om^*\Z$, we have $\om^*=1$. By 
$\cR_+\neq\varnothing$, we have all states in $\N$ lead to $o$ and $\sP=\varnothing$.
\end{proof}
\bibliographystyle{plain}
\bibliography{references}

\end{document}